\newtheorem{lemma}{Lemma}[section]
\newtheorem{thm}[lemma]{Theorem}
\newtheorem{rem}[lemma]{Remark}
\newtheorem{prop}[lemma]{Proposition}
\newtheorem{cor}[lemma]{Corollary}
\newtheorem{example}[lemma]{Example}
\newtheorem{defn}[lemma]{Definition}
\newcommand\matR{{\mathbb{R}}}
\newcommand\matN{{\mathbb{N}}}
\renewcommand{\hbar}{{\overline{h}}}
\newfont{\Got}{eufm10 scaled 1200}
\newcommand{\compo}{\,{\scriptstyle\circ}\,}
\newcommand{\mycap} [1] {\caption{\footnotesize{#1}}}
\newcommand\calG{{\mathcal G}}
\newcommand\calM{{\mathcal M}}
\newcommand\calD{{\mathcal D}}
\newcommand\calO{{\mathcal O}}
\newcommand{\faifig}[3]{\centerline{#2}\mycap{#3}}
\begin{document}

\title{Properly immersed curves in arbitrary surfaces via apparent contours on spines of traversing flows}

\author{Carlo~\textsc{Petronio}\thanks{Partially supported by INdAM through GNSAGA, by
MUR through the PRIN project
n.~2017JZ2SW5$\_$005 ``Real and Complex Manifolds: Topology, Geometry and Holomorphic Dynamics''
and by UniPI through the PRA$\_2018\_22$ ``Geometria e Topologia delle Variet\`a''}}

\maketitle

\begin{abstract}\noindent
Let $\Sigma$ be a compact surface with boundary and $F$ be the set of the orbits of a traversing
flow on $\Sigma$. If the flow is generic, its orbit space is a \emph{spine} $G$ of $\Sigma$,
namely $G$ is a graph embedded in $\Sigma$ and $\Sigma$ is a regular neighbourhood of $G$. Moreover an
extra structure on $G$ turns it into a \emph{flow-spine}, from which one can reconstruct $\Sigma$ and $F$.
In this paper we study properly immersed curves $C$ in $\Sigma$.
We do this by considering generic $C$'s and
their \emph{apparent contour} relative to $F$, namely the set of points of $G$ corresponding to orbits that either are tangent to $C$, or
go through a self-intersection of $C$, or meet the boundary of $C$.
We translate this apparent contour into a decoration of $G$
that allows one to reconstruct $C$, and then we allow $C$ to vary up to homotopy within a fixed generic $F$,
and next also $F$ to vary up to homotopy,
and we identify a finite set of local moves on decorated graphs that translate these homotopies.

\smallskip

\noindent MSC (2020): 57R42 (primary), 37E35, 57R40, 58D10 (secondary).
\end{abstract}

\newcommand{\Flows}{\textsf{Flows}}
\newcommand{\Genflows}{\textsf{Flows}^0}
\newcommand{\Curves}{\textsf{Curves}(\Sigma)}
\newcommand{\Gencurves}{\textsf{Curves}(\Sigma,F)^0}
\newcommand{\Pairs}{\textsf{Pairs}}
\newcommand{\Genpairs}{\textsf{Pairs}^0}

The notion of apparent contour is most popular in three dimensions, where one considers a closed surface $S$ embedded in Euclidean space, its orthogonal projection
on the horizontal plane, the set of points on the surface at which this projection fails to be have injective differential, and the image $A$ of these points on the plane.
Generically, $A$ is an immersed planar curve with some cusps, and one tries to reconstruct from $A$ and some additional data the projected surface $S$, see~\cite{Gio-contours}
and the references therein. Note that the horizontal plane can be viewed as the orbit space of the vertical unit vector field, and $A$ is then interpreted as the set
of orbits tangent to $S$ at some point. Having this interpretation in mind, in this paper we play the same game in one dimension less,
which of course makes the situation more elementary, but we improve the generality under many respects.  Note that the two-dimensional direct analogue of the above
would be to consider a closed curve embedded in Euclidean plane, a line as the orbit space of a constant vector field, and the apparent contour of
the curve on the line (generically, a finite collection of points). On the contrary, we do the following:
\begin{itemize}
  \item We take as an ambient an \emph{arbitrary compact surface} $\Sigma$ with boundary;
  \item We consider curves in $\Sigma$ that are \emph{immersed} rather than embedded, and we allow them to have a boundary, taking \emph{proper} immersions;
  \item We take on $\Sigma$, instead of a constant planar vector field, an \emph{arbitrary traversing flow}, namely a field
  whose orbits start and end on $\partial\Sigma$;
  \item We note that the orbit space of the flow, under a natural genericity condition, 
  is a graph $G$ more complicated than a line,
  and we give it a decoration that allows one to reconstruct both $\Sigma$ and the flow;
  \item We trace the apparent contour on $G$ of a generic properly immersed curve $C$, and we endow it with a decoration enough to reconstruct $C$;
  \item We consider generic properly immersed curves up to \emph{homotopy through properly immersed curves}, and then  
  generic traversing flows up \emph{homotopy through traversing flows}, 
  and we show how to translate
  these equivalence relations into elementary modifications of the decorated graphs, given by finitely many local combinatorial moves.
\end{itemize}

\bigskip

The study of immersed curves in surfaces is a very classical topic to which many articles were devoted, starting perhaps with the old~\cite{Whitney}, 
and also in recent very years it was the object of considerable research.
However in most cases the ambient surface is the plane or the sphere, 
see~\cite{Valette, ArnoldBook, ArnoldAicardi, Chmutov, CisEtAl, CisEtAl, Francis, GoUme, ItoKodai, Nowik, Polyak, SakaTani, Vassiliev1, Vassiliev2},
and the accent is often on further topological or geometric structures with which the curves interact
(for instance, contact and symplectic geometry in~\cite{ArnoldBook, ArnoldAicardi, ItoKodai}, spacial links in~\cite{Chmutov, CisEtAl}, discriminants 
in~\cite{Vassiliev1, Vassiliev2}, graphs in~\cite{SakaTani}). Some research has been devoted to immersed curves in more general surfaces,
but the setting is always very different from ours. In fact, only closed and orientable surfaces were considered, so the idea of taking proper
immersions was not treated, and very different approaches were taken. For instance, \cite{BartholomewEtAl, Carter}
contain the idea of stabilizing the surface, and \cite{ChangErickson} (which is actually mostly centered on the planar case) permits
simplification through non-immersed curves. The papers~\cite{Carter, Turaev} contain combinatorial encodings of immersed curves in closed 
surfaces, but totally unrelated to those we obtain here (see also~\cite{ItoJKTR}, that builds on~\cite{Turaev} to construct invariants, and actually
mostly deals with planar curves). So, to the best of our knowledge, the approach we take in this work was never explored before.

\bigskip

The original results of this paper
are three main statements, which
are too long to be completely reproduced here,
so we only anticipate them omitting several details.
Each of them shows that a certain collection of topological objects
can be described in combinatorial terms as a certain set of decorated graphs up to
decorated homeomorphism and some local moves. Of course, the interest of the construction
lies not only in the combinatorial presentation itself, but mainly on the geometric construction leading to it,
based on the ideas of taking spines to encode flows and apparent contours on spines to encode curves.

\paragraph{Topological objects}
We introduce here the three collections of topological objects for which our statements provide a combinatorial presentation.

By \emph{surface} we will always mean a compact and connected smooth $2$-manifold $\Sigma$ with non-empty boundary $\partial\Sigma$.
We call \emph{traversing flow} on such a $\Sigma$ the collection $F$ of the orbits of a nowhere-zero smooth vector field $v$ on $\Sigma$,
provided each such orbit is either a point or an interval, and it is oriented according to $v$ in the latter case.
We then define $\Flows$ as the set of all traversing flows $F$ on a (varying) surface $\Sigma$, up to the equivalence relation generated by
diffeomorphisms of surfaces and (for fixed $\Sigma$) the variation of $F$ through traversing flows given by a homotopy of the defining field.

Now let $\Sigma$ be a fixed surface.
We call \emph{curve} a compact smooth $1$-dimensional manifold $c$ with (possibly empty) boundary (a collection of circles and segments),
and \emph{properly immersed curve} in $\Sigma$ the image $C$ of a smooth map
$j:c\to\Sigma$ such that the differential of $j$ is injective everywhere, $j^{-1}(\partial\Sigma)=\partial c$ and
$C$ is transverse to $\partial\Sigma$.
We then define $\Curves$ as the set of all properly immersed curves in $\Sigma$, up to the variation of $C$ generated by a
homotopy of the map $j$ through proper immersions.

Finally, we define $\Pairs$ as the set of all pairs $(F,C)$ where $F$ is a traversing flow and $C$ is a properly immersed curve in the same (varying) surface $\Sigma$,
up to the equivalence relation generated by diffeomorphisms of surfaces and (for fixed $\Sigma$) a simultaneous homotopic variation of $F$ and $C$.

As already known~\cite{KatzFlows}, 
there is a very natural notion of \emph{genericity} for a traversing flow on $\Sigma$. Moreover,
a generic traversing flow $F$ exists on every $\Sigma$, 
and there is a very natural definition of \emph{genericity}
with respect to $F$ of a properly immersed curve $C$ in $\Sigma$. We then define:
\begin{itemize}
\item $\Genflows$ as the set of all generic traversing flows $F$ on a varying $\Sigma$, up to
diffeomorphisms;
\item $\Gencurves$ as the set of all properly immersed curves $C$ in a fixed $\Sigma$ that are generic with respect to a fixed generic $F$,
up to diffeomorphisms of $\Sigma$ mapping $F$ to $F$;
\item $\Genpairs$ as the set of all pairs $(F,C)$ as above in the same variable surface $\Sigma$,
where $F$ is generic and $C$ is generic with respect to $F$, up to diffeomorphisms.
\end{itemize}

For convenience we summarize our notation in the following table:

\bigskip

\begin{tabular}{c|c|c}
\textbf{Set} & \textbf{Relation} & \textbf{Symbol} \\ \hline\hline
$\begin{array}{c}\hbox{Traversing flows}\\ \hbox{on surfaces}\end{array}$  &
        Homotopy    &
                $\Flows$ \\ \hline
$\begin{array}{c}\hbox{Generic traversing}\\ \hbox{flows on surfaces}\end{array}$   &
        Diffeomorphism    &
                $\Genflows$ \\ \hline\hline
$\begin{array}{c}\hbox{Properly immersed curves}\\ \hbox{in a surface }\Sigma\end{array}$ &
        Homotopy &
                $\Curves$ \\ \hline
$\begin{array}{c}\hbox{Properly immersed curves}\\ \hbox{in }\Sigma\hbox{ generic for a flow }F\end{array}$ &
        Diffeomorphism &
                $\Gencurves$
                \\ \hline\hline
$\begin{array}{c}\hbox{Pairs (traversing flow},\\ \hbox{properly immersed curve)}\\ \hbox{on the same surface}\end{array}$ &
        $\begin{array}{c}\hbox{Simultaneous}\\ \hbox{homotopy}\end{array}$ &
                $\Pairs$ \\ \hline
$\begin{array}{c}\hbox{Generic pairs (traversing flow},\\ \hbox{properly immersed curve)}\\ \hbox{on the same surface}\end{array}$ &
        Diffeomorphism &
                $\Genpairs$ \\
\end{tabular}

\paragraph{Graphs, reconstruction, and moves}
If $X$ is one of the sets $\Flows$, $\Curves$, or $\Pairs$, let $X^0$ be the corresponding $\Genflows$, $\Gencurves$ for some $F$, or $\Genpairs$.
Note that there is an obvious projection $\pi(X^0):X^0\to X$.
We will define below the following objects:
\begin{itemize}
  \item A set $\calG(X^0)$ of finite decorated graphs, up decorated homeomorphism;
  \item A map $\varphi(X^0):\calG(X^0)\to X^0$ based on an explicit construction;
  \item A finite set $\calM(X^0)$ of local combinatorial moves on $\calG(X^0)$.
\end{itemize}
And we will prove (see Theorems~\ref{flows:thm},~\ref{curves:thm}, and~\ref{pairs:thm}):
\begin{thm}\label{summarising:thm}\
\begin{itemize}
\item $\varphi(X^0):\calG(X^0)\to X^0$ is bijective;
\item $\pi(X^0): X^0\to X$ is surjective;
\item Two graphs in $\calG(X^0)$ have the same image in $X$ under the composition $\pi(X^0)\compo\varphi(X^0)$
if and only if they are related by a finite combination of moves in $\calM(X^0)$.
\end{itemize}
\end{thm}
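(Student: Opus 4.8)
The plan is to prove the three bullets separately for each of the three instances $X\in\{\Flows,\Curves,\Pairs\}$, organizing the work so that the flow case is treated first and the curve and pair cases build on it. For the first bullet, bijectivity of $\varphi(X^0)$, I would argue surjectivity and injectivity in turn. Surjectivity is the ``reconstruction'' content: given a generic flow $F$ on $\Sigma$, the orbit space is by hypothesis a spine $G$, and one reads off the required decoration (the local combinatorial data at vertices of $G$ recording how nearby orbits open up and close, i.e.\ the flow-spine structure) directly from $F$; one then checks that $\varphi(X^0)$ applied to this decorated graph returns $F$ up to diffeomorphism. The genericity hypothesis is exactly what guarantees the orbit space is a graph with only the allowed local models, so the decoration lies in $\calG(X^0)$. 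For the curve case one does the same with the apparent contour: a generic $C$, traced onto $G$, gives a finite set of marked points and arcs with the decoration recording tangency/crossing/endpoint type and the side information needed to rebuild $C$ inside the regular neighbourhood of $G$; genericity of $C$ relative to $F$ ensures only finitely many standard local pictures occur. Injectivity is the statement that the explicit reconstruction $\varphi$ has no ambiguity: two decorated graphs producing diffeomorphic output must be isomorphic as decorated graphs. Here I would show $\varphi$ has a left inverse, namely the ``read off the decoration'' assignment just described, which is well defined up to decorated homeomorphism precisely because the spine of a surface (resp.\ the apparent contour) is unique up to the appropriate ambient isotopy.

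For the second bullet, surjectivity of $\pi(X^0)$, the content is that every flow is homotopic to a generic one, every properly immersed curve is homotopic (through proper immersions) to one generic for a given generic $F$, and every pair is simultaneously homotopic to a generic pair. The flow statement is cited from~\cite{KatzFlows} (a generic traversing flow exists on every $\Sigma$ and any traversing flow can be perturbed to one); the curve statement is a standard transversality/general-position argument — one perturbs $j$ so that $C$ has only transverse double points, is tangent to $F$-orbits only at finitely many nondegenerate points, and meets those special orbits and $\partial\Sigma$ transversally and at distinct orbits — and the pair statement is a relative version of the same, perturbing $v$ first and then $j$.

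The third bullet, the ``calculus of moves,'' is the heart of the paper and the main obstacle. The ``if'' direction is the easy half: for each move in $\calM(X^0)$ one exhibits an explicit homotopy of flows (or of curves, or of pairs) realizing it, so $\pi\compo\varphi$ is unchanged; this is a finite check, one local picture per move. The ``only if'' direction is the hard part and I expect it to consume most of the proof. The strategy is the standard one for move calculi: given two generic objects that are homotopic, choose a generic path between them in the relevant space of objects (using a Cerf-theory/stratification argument — the space of traversing flows, resp.\ of proper immersions, is stratified by degeneracy, the top stratum being the generic locus and the codimension-one strata corresponding to elementary catastrophes), arrange that the path crosses only codimension-one walls, and crosses each transversally and one at a time. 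Each wall-crossing changes the decorated graph by exactly one local move, so one must (i) classify the codimension-one degeneracies — for flows these are the standard birth/death and shift events of traversing flows on surfaces; for curves the classical events of immersion theory (a tangency of $C$ with an orbit appearing/disappearing, a triple point, a self-tangency, a tangency passing through $\partial\Sigma$, and so on); for pairs the union of the two lists plus the mixed events where a curve-feature collides with a spine-vertex — and (ii) verify that the local model of each degeneracy induces precisely a move on the list $\calM(X^0)$, and that the list is complete in that every codimension-one stratum is accounted for. One also has to handle the ambient diffeomorphism ambiguity: changing the generic path by a homotopy of paths changes the sequence of moves by ``distant commutation'' and by cancellation of a move with its inverse, which is why the conclusion is stated up to a finite combination of moves rather than a fixed sequence. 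The genuinely delicate points will be the completeness of the move list (ruling out exotic codimension-one phenomena, especially for the non-orientable or disconnected-boundary surfaces that the general setting allows) and the bookkeeping at the interface between curve-moves and flow-moves in the pair case; I would isolate these in separate lemmas, one per family of local models, and deduce Theorem~\ref{summarising:thm} by assembling Theorems~\ref{flows:thm},~\ref{curves:thm}, and~\ref{pairs:thm}.
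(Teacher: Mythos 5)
Your proposal is correct and follows essentially the same route as the paper: Theorem~\ref{summarising:thm} is deduced by assembling Theorems~\ref{flows:thm}, \ref{curves:thm} and~\ref{pairs:thm}, whose proofs are exactly as you outline — bijectivity of $\varphi(X^0)$ via the explicit inverse that reads the decorated orbit space (resp.\ apparent contour) off the generic object, surjectivity of $\pi(X^0)$ by perturbation to genericity, and the move calculus by classifying the elementary catastrophes (codimension-one degeneracies) along a generic homotopy and matching each to a move in $\calM(X^0)$, with the pair case combining the flow and curve catastrophe lists.
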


\begin{cor}\label{presentation:cor}
$X$ can be identified to the quotient of $\calG(X^0)$ under the equivalence relation generated by $\calM(X^0)$.
\end{cor}

\begin{rem}\label{closed:curves:rem}
\emph{It is easy to identify the subsets of $\calG(\Gencurves)$ consisting of the graphs that give
\emph{closed} curves, or \emph{embedded} curves, or \emph{closed embedded} curves,
and the moves in $\calM(\Gencurves)$ that apply to these subsets. Hence 
from Theorem~\ref{summarising:thm}, by restriction, we get 
combinatorial presentations of the sets of all closed immersed curves in $\Sigma$, 
of all embedded curves in $\Sigma$, and of all closed embedded curves in $\Sigma$, always up to homotopy.
And similarly taking subsets of $\calG(\Genpairs)$ and $\calM(\Genpairs)$
we get similar presentations with $\Sigma$ allowed to vary up to diffeomorphism and $F$ up to homotopy.
See Remarks~\ref{graphs:for:embedded:closed:curves:rem},~\ref{moves:for:embedded:closed:curves:rem}, and~\ref{embedded:closed:pairs:rem}.}
\end{rem}

Note that the combinatorial presentation of Corollary~\ref{presentation:cor}, or its restrictions just dicussed,
provide a potential framework for the construction of invariants of properly immersed curves (or closed ones, or embedded ones,
or closed embedded ones).

\bigskip

We now very informally allude to the constructive nature of the maps $\varphi(X^0)$.
For $F\in\Genflows$ on $\Sigma$, the graph $G$ that gives $F$ via $\varphi(\Genflows)$ 
is the orbit space of $F$, endowed
with a certain decoration (this was already known to Katz~\cite{KatzFlows}, and the decoration includes
a structure of train-track, see~\cite{FLP}).
Moreover $G$ is a spine of $\Sigma$, namely it has a natural embedding in $\Sigma$,
and a regular neighbourhood of $G$ in $\Sigma$ is diffeomorphic to $\Sigma$.
For this reason $G$ is called a \emph{flow-spine}.

For $C\in\Gencurves$, the graph that gives $C$ via $\varphi(\Gencurves)$  
is obtained by adding bivalent decorated vertices to the above graph $G$. These new vertices
encode the apparent contour of $C$ with respect to $F$, the double points of $C$, and its points on $\partial\Sigma$.
The map $\varphi(\Genpairs)$ is just given by the collection of all $\varphi(\Gencurves)$ as $\Sigma$ and $F$ vary.

\bigskip

We also mention that the moves $\calM(X^0)$ arise by translating in combinatorial terms the first-order violations
of genericity that can occur along a homotopy of a traversing flow, or of a properly immersed curve within a generic flow,
or of a simultaneous homotopy of a traversing flow and a properly immersed curve.

\bigskip

We conclude this introduction saying that
the idea of describing a manifold by means of the orbit space of a generic traversing flow was exploited in three dimensions in~\cite{BP-LNM} and~\cite{Ishii}.
This approach was later developed in various directions by,
among others, S.~Baseilhac, R.~Benedetti, I.~Ishii,  
M.~Ishikawa, Y.~Koda, H.~Naoe, and the author
(a complete account of these results
is not relevant for us here).
We are planning in the future to extend the methods of the present
paper to study surfaces embedded in an arbitrary $3$-dimensional manifold via
their apparent contour on the flow-spine of a traversing flow on the manifold.


\section{Traversing flows on surfaces\\ and their flow-spines}

In this section we introduce traversing flows on surfaces and their combinatorial encoding by decorated uni-trivalent graphs up certain moves.
As already mentioned, part of what we show was already known to Katz.

As stated in the introduction, all our surfaces $\Sigma$ will be smooth, connected, compact, and with boundary $\partial\Sigma\neq\emptyset$.
We call \emph{traversing flow on $\Sigma$} a partition $F$ of $\Sigma$ such that:
\begin{itemize}
\item Each element of $F$ is either a point or an \emph{oriented} arc;
\item There exists a smooth nowhere-zero vector field $v$ on $\Sigma$ such that $F$ is the collection of the oriented orbits of $v$.
\end{itemize}
Note that:
\begin{itemize}
  \item If a point is in $F$ then it is on $\partial\Sigma$;
  \item The ends of an arc in $F$ are on $\partial\Sigma$, but also inner points can be.
\end{itemize}
We now define $\Flows$ as the quotient of the set of all traversing flows $F$ on an arbitrary (variable) surface $\Sigma$
under the equivalence relation $\sim$ given by \emph{diffeomorphism and homotopy
through traversing flows} on $\Sigma$, namely that generated as follows:
\begin{itemize}
  \item $F_0\sim F_1$ if there exists a diffeomorphism $u:\Sigma_0\to\Sigma_1$
  of the underlying surfaces
  such that $u(F_0)=F_1$ respecting the orientation of the arcs;
  \item $F_0\sim F_1$ on the same $\Sigma$ if there exists a family $\{v_t:\ t\in[0,1]\}$ of nowhere-zero vector fields on $\Sigma$ depending smoothly on $t$ such that
for all $t$ the orbits of $v_t$ are a traversing flow $F_t$.
\end{itemize}

We now introduce the notion of genericity for a flow $F$ in $\Sigma$ already alluded to in the introduction. For short,
$F$ is generic if each of its orbits is tangent to $\partial\Sigma$ at one point at most, with order-$2$ contact. This means that
for every $f_0\in F$ there exists an open subset $U$ of $\Sigma$ which contains $f_0$ and
is a union of elements of $F$ such that $(U,f_0,\{f\in F:\ f\subset U\})$ is diffeomorphic to $(\overline{U},\overline{f}_0,\overline{F})$ where
$\overline{U}$ is one of three specific subsets  of $\matR^2$ described below, while $\overline{F}$ is always the
traversing flow on $\overline{U}$ generated by the constant upward vertical vector field,
and $\overline{f}_0$ is always the orbit through $(0,0)$. The models for $\overline{U}$ are the following, and
correspondingly $f_0$ is termed as indicated (see Fig.~\ref{genericflow:fig}):
\begin{figure}
\faifig
{}
{\includegraphics[scale=0.6]{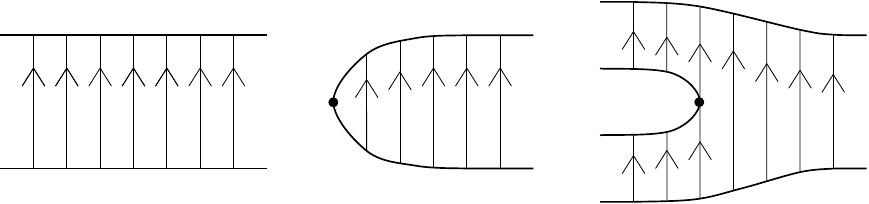}}
{Non-tangent orbits, a convex tangent orbit, and a concave tangent orbit.\label{genericflow:fig}}
\end{figure}

\begin{itemize}
  \item $\overline{U}=(-1,1)\times[-1,1]$; we say $f_0$ is a non-tangent orbit;
  \item $\overline{U}=\{(x,y)\in\matR^2:\ y^2\leqslant x<1\}$; we say $f_0$ is a convex tangent orbit;
  \item $\overline{U}=\{(x,y)\in\matR^2:\ 2|x|<1,\ |y|\leqslant1,\ x\geqslant -y^2\}$; we say $f_0$ is a concave tangent orbit.
\end{itemize}

We now note that the set of all traversing flows on $\Sigma$ is the quotient of a space of smooth vector fields on $\Sigma$
($F$ represents the class of all the vector fields whose orbits are the elements of $F$). The latter space can be given the
$\textrm{C}^{\infty}$ topology, so the former can be given the quotient topology.

\begin{rem}
\emph{Katz provided a formal proof in~\cite{KatzFlows}
of the fact that in the space of all traversing flows the subspace of generic ones is open and dense.
In particular, some arbitrarily small perturbation of any traversing flow is generic, and a sufficiently
small perturbation of a generic traversing flow remains generic. See also~\cite{KatzModels, KatzBook} for an extension of this
result to any dimension, and~\cite{KatzHolography} for interesting related results.}
\end{rem}

We define $\Genflows$ as the set of all generic traversing flows $F$ on an arbitrary (variable) $\Sigma$
up to diffeomorphisms.
To introduce the combinatorial counterpart $\calG(\Genflows)$ of this set, we denote by $\calO(F)$ the orbit space of $F$, with the natural topology.

\begin{lemma}
If $F$ is a generic flow on $\Sigma$ then $\calO(F)$ is a finite connected graph with vertices of valence $1$ or $3$.
\end{lemma}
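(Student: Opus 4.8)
The plan is to analyze $\calO(F)$ locally, using the three standard models for a generic flow that were just introduced, and then patch the local pictures together. First I would observe that the quotient map $q:\Sigma\to\calO(F)$ is continuous, and since $\Sigma$ is compact and connected, $\calO(F)$ is compact and connected as well; so the only real work is to show that every point of $\calO(F)$ has a neighbourhood that looks like an arc (a regular point, valence $2$ in the graph, but not a genuine vertex), a ``free end'' (valence $1$), or a triple point (valence $3$), and that only finitely many of the latter two types occur.

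\medskip

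For the local analysis I would go through the three models $(\overline U,\overline F)$ one at a time. In the non-tangent model $\overline U=(-1,1)\times[-1,1]$ the orbit space is the open interval $(-1,1)$ parametrized by the $x$-coordinate, so every non-tangent orbit has a neighbourhood in $\calO(F)$ homeomorphic to an open arc; such points are interior edge points. In the convex model $\overline U=\{y^2\le x<1\}$ the vertical orbits are the segments over each $x\in(0,1)$ together with the single tangency point $(0,0)$; the orbit space is the half-open interval $[0,1)$, so a convex tangent orbit is a valence-$1$ vertex (a free endpoint of an edge). In the concave model $\overline U=\{2|x|<1,\ |y|\le1,\ x\ge -y^2\}$, the orbits over $x\ge0$ are single segments, while for each $x\in(-\tfrac12,0)$ the set $\{x\}\times[-1,1]\cap\overline U$ is the disjoint union of two segments (above and below the parabola), which are two distinct orbits; as $x\uparrow 0$ both of these merge into the orbit over $x=0$. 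Hence the orbit space near a concave tangency is homeomorphic to a ``Y'': three half-open arcs glued at one endpoint, i.e.\ a valence-$3$ vertex. This is the one model-computation that needs to be done with a little care — keeping straight which orbits survive to the boundary $\partial\Sigma$ and which merge — and I expect it to be the main (though still routine) obstacle.

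\medskip

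Having the local models, I would assemble the global statement as follows. By genericity every $f_0\in F$ has a saturated neighbourhood $U$ on which $F$ restricts to one of the three models, so $\calO(F)$ is covered by open sets each homeomorphic either to an open interval, a half-open interval, or a ``Y''. In particular $\calO(F)$ is locally homeomorphic to a graph, and since every point has a neighbourhood with at most three ``branches'', no point can have valence exceeding $3$; the points with a genuine vertex structure are exactly the convex (valence $1$) and concave (valence $3$) tangent orbits. To see that there are only finitely many of these, note that the set of orbits tangent to $\partial\Sigma$ maps under $q$ onto the vertex set, and in each local model the tangent orbit is the unique one of its kind in $U$; by compactness of $\Sigma$ finitely many such $U$'s suffice to cover it, so there are finitely many tangent orbits, hence finitely many vertices. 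Finally, $\calO(F)$ minus its finitely many vertices is a compact $1$-manifold with boundary once we cut along the vertices, i.e.\ a finite disjoint union of arcs; gluing these finitely many arcs along the finitely many vertices exhibits $\calO(F)$ as a finite graph, connected by the remark above, with all vertices of valence $1$ or $3$.

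\medskip

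One point to be careful about: I should check that distinct tangent orbits give distinct vertices and, more importantly, that the local homeomorphisms from the models are compatible enough to give $\calO(F)$ a genuine CW/graph structure rather than something merely ``locally graph-like''. This follows because the topology on $\calO(F)$ is the quotient topology and the model neighbourhoods are saturated and open, so the gluing of the finitely many closed arcs along the finitely many vertex points is exactly the quotient topology; no pathology (such as infinitely many edges accumulating) can occur, again by compactness of $\Sigma$.
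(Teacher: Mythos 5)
Your proof is correct, but it takes a genuinely different route from the paper's. You work locally: you compute the orbit space of each of the three models (an open arc for non-tangent orbits, a half-open arc with the convex tangency as a valence-$1$ endpoint, and a ``Y'' with the concave tangency as a valence-$3$ point --- your bookkeeping of which orbits merge as $x\uparrow 0$ in the concave model is the right computation and is done correctly), and then you patch these local pictures together, using compactness of $\Sigma$ to get finitely many tangent orbits and hence finitely many vertices and edges. The paper instead produces the graph structure globally in one step: it introduces the subset $\partial_+\Sigma$ of $\partial\Sigma$ where $F$ enters $\Sigma$ or is tangent to it; this is a finite union of arcs, every orbit meets it in exactly one point except the concave tangent orbits, which meet it twice (at an interior point of one arc, where the orbit enters, and at an endpoint of another arc, the tangency point), so $\calO(F)$ is obtained from $\partial_+\Sigma$ by identifying finitely many such pairs of points --- visibly a finite graph whose vertices have valence $1$ (convex tangencies) or $3$ (concave tangencies). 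The cross-section argument buys a global combinatorial model of $\calO(F)$ essentially for free and sidesteps your final assembly step, in which you must argue (correctly, but with some care about the quotient topology and Hausdorffness) that a compact, locally graph-like space is an honest finite graph; your local argument, on the other hand, is more self-contained, using nothing beyond the three model neighbourhoods, and it makes the origin of the valences $1$ and $3$ completely transparent. Connectedness is handled the same way in both (continuous surjection from the connected $\Sigma$).
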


\begin{proof}
First, note that the points at which $F$ is tangent to $\partial\Sigma$ are isolated, so they are finite in number. Now,
let $\partial_+\Sigma$ be the set of points of $\partial\Sigma$ at which $F$ is entering $\Sigma$ or is tangent to $\partial\Sigma$, see
Fig.~\ref{enteringboundary:fig}.
\begin{figure}
\faifig
{}
{\includegraphics[scale=0.6]{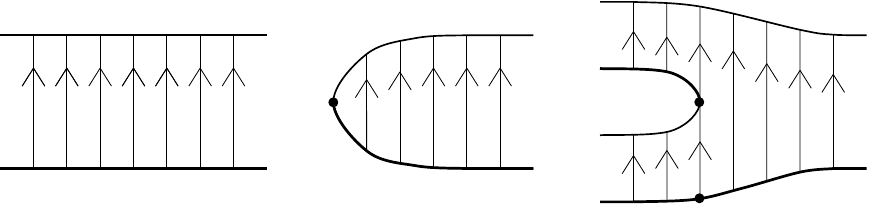}}
{$\partial_+\Sigma$ is the thicker portion of $\partial\Sigma$, and the orbit space
is obtained by merging together the two points marked by dots on the right.
\label{enteringboundary:fig}}
\end{figure}
Then $\partial_+\Sigma$ is a finite union of arcs and every orbit of $F$ meets $\partial_+\Sigma$ at precisely one point, except the
concave orbits, that meet $\partial_+\Sigma$ at one internal point of an arc and at an endpoint of another arc (possibly the same one).
So the orbit space of $F$ is obtained from $\partial_+\Sigma$ by merging together these two points for every concave orbit.
This shows that $\calO(F)$ is a finite graph with vertices of valence $1$ or $3$. A path in $\Sigma$ joining two points of two orbits
of $F$ projects in $\calO(F)$ to a path joining the points representing the orbits, so $\calO(F)$ is connected.
\end{proof}

Note that if $\Sigma$ is $S^1\times[0,1]$ and $F$ is parallel to $[0,1]$ then $\calO(F)$ is a circle without vertices,
that we declare from now on to be a legal graph.

A useful alternative way to realize $\calO(F)$ within $\Sigma$ is shown in Fig.~\ref{flowspine:fig},
\begin{figure}
\faifig
{}
{\includegraphics[scale=0.6]{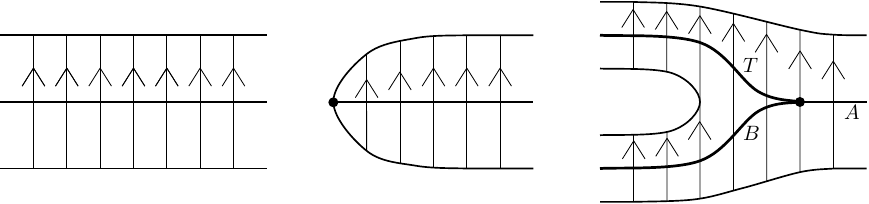}}
{Portions of a flow-spine
\label{flowspine:fig}}
\end{figure}
where we also introduce at the trivalent vertices a singular smooth structure
and a labelling of the germs of edge, where $T/B/A$ stand respectively for Top/Bottom/Alone.
With these extra structures, $\calO(F)$ will be called a \emph{flow-spine} of $F$.
Note that the labels $T/B/A$ already allow to reconstruct the singular smooth structure,
but in all our pictures we will also represent the latter, which is very
intuitive.

\begin{rem}
\emph{The singular smooth structure we introduce at the trivalent vertices of our graphs
gives the $1$-dimensional analogue of the notion of \emph{branched surface} (see~\cite{BP-LNM} and the references therein).
It was given the name of \emph{train-track} and deeply exploited by W.~P.~Thurston (see~\cite{FLP} and the references therein), but
for different aims than ours.}
\end{rem}

We now proceed in the opposite direction, from graphs to flows, defining $\calG(\Genflows)$ and
$\varphi(\Genflows):\calG(\Genflows)\to\Genflows$.
The idea of using the labelling to encode the flow is due to
Katz~\cite{KatzFlows}.

\begin{defn}
\emph{We define $\calG(\Genflows)$ as the set of finite connected graphs with vertices of valence $1$ or $3$, where
the three germs of edge at each vertex of valence $3$ are decorated by the three labels $T,B,A$.
The elements of $\calG(\Genflows)$ are viewed up to decorated homeomorphism.}
\end{defn}

\begin{defn}
\emph{We define
$\varphi(\Genflows):\calG(\Genflows)\to\Genflows$ as follows. If $G\in\calG(\Genflows)$ has no vertex, so it is a circle,
$\varphi(\Genflows)(G)$ is $S^1\times[0,1]$ with flow parallel to $[0,1]$.
Otherwise $\varphi(\Genflows)(G)$ is constructed as follows:
\begin{itemize}
\item Cut each edge of $G$ into halves by means of a small dash, as in Fig.~\ref{edgecut:fig};
\begin{figure}
\faifig
{}
{\includegraphics[scale=0.6]{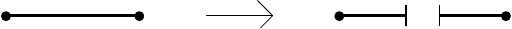}}
{Cutting an edge into halves.\label{edgecut:fig}}
\end{figure}
\item Associate to each resulting portion of graph a portion of surface with boundary and a flow on it as in
Fig.~\ref{flowportions:fig};
\begin{figure}
\faifig
{}
{\includegraphics[scale=0.6]{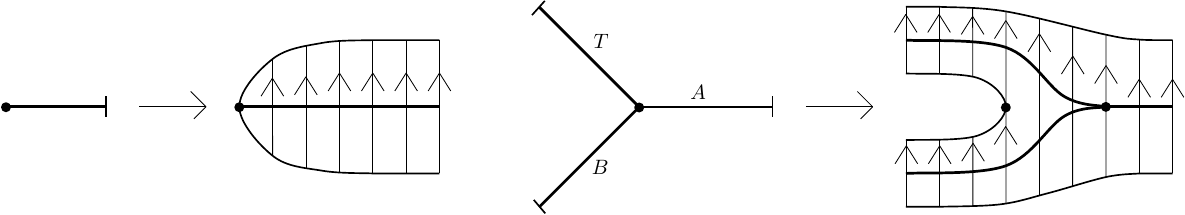}}
{Portions of flows corresponding to portions of graphs.
\label{flowportions:fig}}
\end{figure}
\item Glue these portions of surface with flow along boundary arcs of the flows
so to reconstruct the original graph and to respect the orientation of the glued arcs.
\end{itemize}
The result is of course a generic traversing flow $F$ on a surface $\Sigma$, well-defined up to diffeomorphism
and depending only on $G$ up to decorated homeomorphism.}
\end{defn}

\begin{rem}
\emph{The portion of graph shown on the left in Fig.~\ref{flowportions:fig} is symmetric
with respect to the reflection in a horizontal line, while the corresponding portion of surface with flow is not,
so an ambiguity issue
might seem to arise. But the gluings should match the
orientation of the arcs, so the result of the construction is well defined.}
\end{rem}

\begin{example}
\emph{Figure~\ref{mobius:fig}
\begin{figure}
\faifig{}
{\includegraphics[scale=0.6]{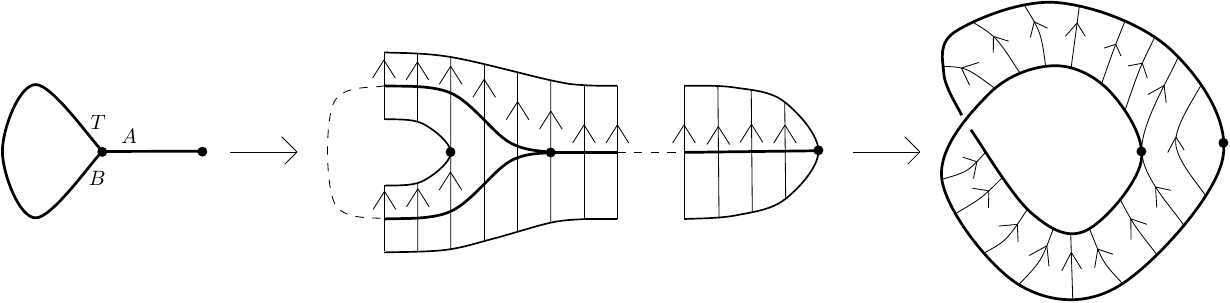}}
{A flow-spine encoding a generic flow on the M\"obius strip. Here and in Fig.~\ref{annulus:fig} a dashed curve indicates the flow lines to be identified (respecting the orientation).
\label{mobius:fig}}
\end{figure}
illustrates how $\varphi(\Genflows)$ works on an easy flow-spine, giving a generic flow on the M\"obius strip.}
\end{example}

\begin{example}
\emph{Figure~\ref{annulus:fig}
\begin{figure}
\faifig{}
{\includegraphics[scale=0.6]{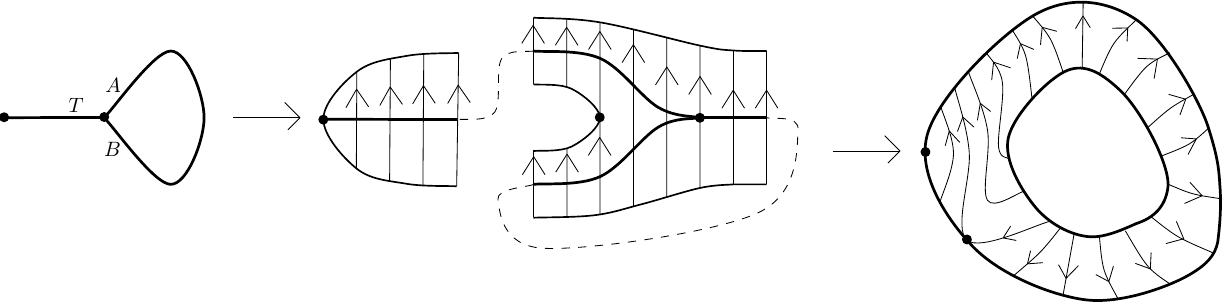}}
{A flow-spine encoding a generic flow on the annulus.
\label{annulus:fig}}
\end{figure}
shows how a mere change of labelling on the flow-spine of Fig.~\ref{mobius:fig} produces a completely different topological result.}
\end{example}

To proceed, we introduce on $\calG(\Genflows)$ the four combinatorial moves described in Fig.~\ref{s-moves:fig}, collectively denoted by $\calM(\Genflows)$.
\begin{figure}
\faifig{}
{\includegraphics[scale=0.6]{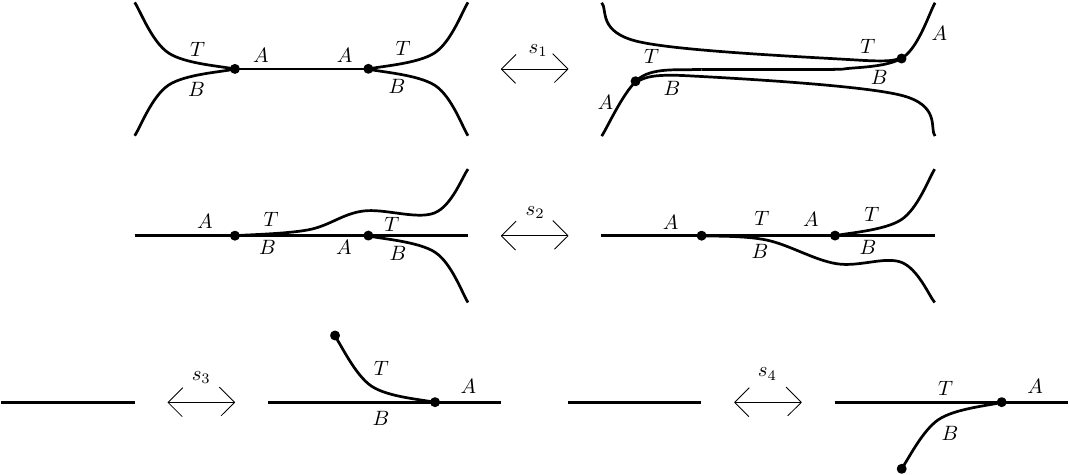}}
{Moves on flow-spines.\label{s-moves:fig}}
\end{figure}
Note that for the moves $s_1$, $s_3$ and $s_4$, the configuration on the left in Fig.~\ref{s-moves:fig} has a vertical symmetry in the plane, wheras that on the right does not, which
implies that we have two ways to apply these moves left to right, and only one right to left.
(For more on this, see Remark~\ref{left-right:rem} below.)

Now let $\pi(\Genflows): \Genflows\to \Flows$ associate
to a generic traversing flow up to diffeomorphism its equivalence class up to homotopy.
The first item of the next statement was already known to Katz~\cite{KatzFlows}.

\begin{thm}\label{flows:thm}\
\begin{itemize}
\item $\varphi(\Genflows):\calG(\Genflows)\to \Genflows$ is bijective;
\item $\pi(\Genflows): \Genflows\to \Flows$ is surjective;
\item Two graphs in $\calG(\Genflows)$ have the same image in $\Flows$ under the composition $\pi(\Genflows)\compo\varphi(\Genflows)$
if and only if they are related by a finite combination of moves in $\calM(\Genflows)$.
\end{itemize}
\end{thm}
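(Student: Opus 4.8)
The plan is to establish the three bullets in turn, the first two being relatively soft and the third (the move calculus) carrying the real content.

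For the first bullet, surjectivity of $\varphi(\Genflows)$ is already contained in the discussion preceding the statement: given $F\in\Genflows$, the graph $\calO(F)$ with the $T/B/A$ labelling read off from the local models of convex and concave tangencies is an element of $\calG(\Genflows)$ whose image is $F$ (the circle case being handled separately). For injectivity, I would argue that the labelled graph determines, local model by local model, the germ of $(\Sigma,F)$ along each orbit, and that the gluing recipe in the definition of $\varphi(\Genflows)$ reconstructs $(\Sigma,F)$ up to diffeomorphism uniquely from these germs and their incidences; the orientation-of-arcs condition kills the only apparent ambiguity (the remark after the definition). Hence the construction $F\mapsto\calO(F)$ is a two-sided inverse up to decorated homeomorphism.

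For the second bullet, surjectivity of $\pi(\Genflows)$ is immediate from the Remark attributed to Katz: every traversing flow admits an arbitrarily small perturbation to a generic one, and a small perturbation does not change the homotopy class, so every class in $\Flows$ has a generic representative.

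The third bullet is the heart of the matter, and I expect it to split into a ``soundness'' half and a ``completeness'' half. For soundness, I would check that each of the four moves $s_1,\dots,s_4$ in Fig.~\ref{s-moves:fig}, when fed through $\varphi(\Genflows)$, produces two generic flows on the same surface that are homotopic through traversing flows: each move is visibly a local modification of a flow-spine, so it suffices to exhibit, in a neighbourhood of the affected portion, an explicit $1$-parameter family $\{v_t\}$ of nowhere-zero fields whose orbits stay traversing and which interpolates between the two local pictures, extended by the identity outside. For completeness, the strategy is the standard genericity/transversality argument: given two generic flows $F_0,F_1$ that are homotopic through (not necessarily generic) traversing flows, take a generic path $\{F_t\}$ between them in the space of traversing flows; by the openness–density statement of Katz this path can be chosen to meet the non-generic locus only in finitely many times $t_1<\dots<t_k$, at each of which exactly one ``codimension-one'' degeneration occurs. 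I would then enumerate these first-order violations of genericity — a tangency of an orbit with $\partial\Sigma$ that is cubic rather than quadratic, two tangency points colliding along one orbit, a tangency point passing through $\partial_+\Sigma$, and the birth/death of a pair of tangencies — and verify, by inspecting the local models, that crossing each such degeneration changes $\calO(F_t)$ by exactly one of the moves $s_1,\dots,s_4$ (or by a decorated homeomorphism, when nothing combinatorial happens). Concatenating over $t_1,\dots,t_k$ yields a finite sequence of moves relating $\calO(F_0)$ to $\calO(F_1)$.

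The main obstacle I anticipate is the completeness half, specifically the claim that the list $s_1,\dots,s_4$ is \emph{exhaustive}: one must argue carefully that the four degenerations above are the only ones of codimension one, i.e. that along a generic path no other local event can force a change of the labelled graph, and that each such event is realized by the stated move and not, say, by a move composed with an unexpected reshuffling of the $T/B/A$ labels at a far-away vertex. This requires a genuine local analysis of the stratification of the space of traversing flows near its generic part, together with a check that the global flow-spine changes only in the advertised local way (no ``action at a distance''). The left-to-right versus right-to-left asymmetry noted after Fig.~\ref{s-moves:fig} must also be tracked, since the orientation data breaks the naive symmetry of the pictures; I would handle this by treating the moves as the unoriented relations they generate and deferring the orientation bookkeeping to Remark~\ref{left-right:rem}.
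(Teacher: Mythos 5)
Your proposal is correct and follows essentially the same route as the paper: the inverse of $\varphi(\Genflows)$ is the decorated orbit-space construction, surjectivity of $\pi(\Genflows)$ comes from Katz's perturbation result, and the third bullet is proved by perturbing the homotopy so that genericity fails only at finitely many times and only via codimension-one catastrophes, which the paper enumerates as (i) one orbit with two order-$2$ tangencies to $\partial\Sigma$ (two sub-cases by sides, giving $s_1$ and $s_2$) and (ii) one orbit with a single order-$3$ tangency (two sub-cases by start/end, giving $s_3$ and $s_4$). Your four-item list of degenerations is slightly redundant --- the ``cubic tangency'' and the ``birth/death of a pair of tangencies'' are the same event, and ``a tangency point passing through $\partial_+\Sigma$'' does not occur --- but this does not affect the structure of the argument, which matches the paper's.
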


\begin{proof}
The inverse of $\varphi(\Genflows)$ is the map that associates to a generic traversing
flow $F$ on a surface $\Sigma$ its decorated orbit space $\calO(F)$, as shown in Fig.~\ref{flowspine:fig}.
Note that if $F$ has no tangency to $\partial\Sigma$ then $\Sigma$ is $S^1\times[0,1]$ with $F$ parallel to $[0,1]$.

Surjectivity of $\pi(\Genflows)$ follows from the fact that any traversing flow can be slightly perturbed to a generic one, and
this perturbation is a homotopy of the generating vector field.

Turning to the third item, we recall that a traversing flow $F$ on $\Sigma$ is generic if each of its orbits is tangent to $\partial\Sigma$
at one point at most, with order-$2$ contact. This implies that along an evolution of $F$ through traversing flows generated by a homotopy
of the defining vector fields, up to small perturbation only the following
elementary catastrophes happen, and at different times:
\begin{enumerate}
  \item One orbit $f$ is tangent to two points of $\partial\Sigma$, with order-$2$ contact;
  \item One orbit $f$ is tangent to one point of  $\partial\Sigma$, with order-$3$ contact.
\end{enumerate}
The first type of catastrophe can actually happen in two essentially distinct ways, depending on whether the two
folds of $\partial\Sigma$ tangent to $f$ lie to opposite sides of $f$ or to the same side, see
the first two portions of Fig.~\ref{nongenericflows:fig}.
\begin{figure}
\faifig{}
{\includegraphics[scale=0.6]{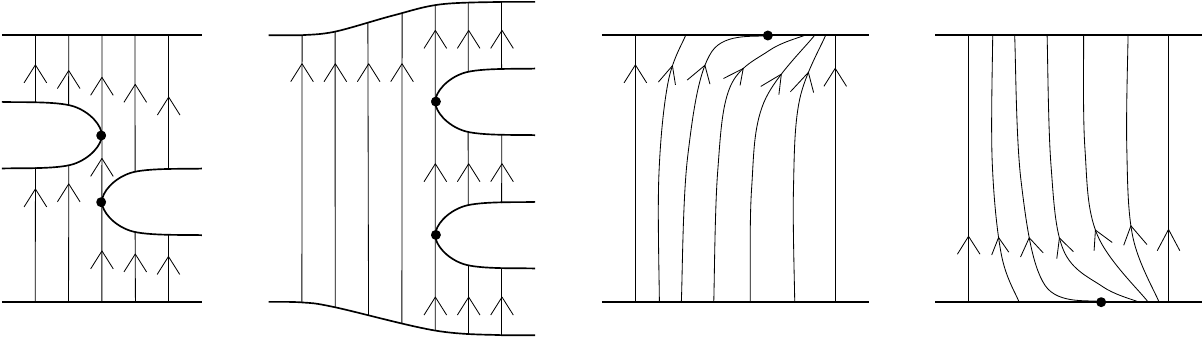}}
{Non-generic flows.\label{nongenericflows:fig}}
\end{figure}
Similarly, the second type can happen in two ways, depending on whether the tangency is at the end or at the start of $f$, see
the last two portions of Fig.~\ref{nongenericflows:fig}.
Now Fig.~\ref{moves1:fig}
\begin{figure}
\faifig{}
{\includegraphics[scale=0.6]{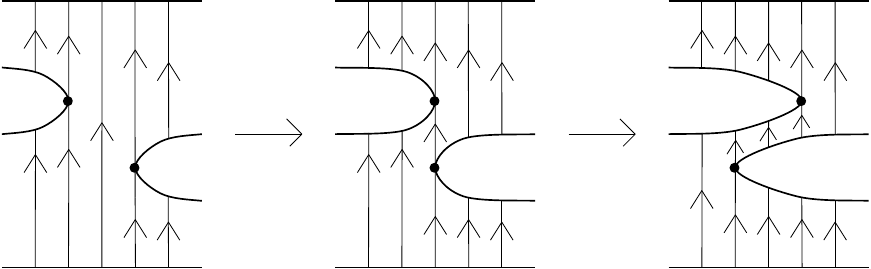}}
{The elementary catastrophe leading to the move $s_1$.\label{moves1:fig}}
\end{figure}
shows how $F$ evolves as it goes through a catastrophe as in the first portion of Fig.~\ref{nongenericflows:fig},
which readily translates at the level of flow-spines into the move $s_1$ of
Fig.~\ref{s-moves:fig}. Similarly, the second portion of Fig.~\ref{nongenericflows:fig} gives the move $s_2$
(see below Fig.~\ref{si-2-move-proof:fig} for an illustration of an enhanced version of this move).
In Fig.~\ref{moves3:fig}-top we show how $F$ evolves as it goes through a catastrophe as in the third portion of Fig.~\ref{nongenericflows:fig}.
This evolution is best visualized by redrawing the picture so that the flow is constant vertical upward, and $\Sigma$ changes shape, as
done in Fig.~\ref{moves3:fig}-bottom, which readily implies that at the level of flow-spines we get the move $s_3$.
\begin{figure}
\faifig{}
{\includegraphics[scale=0.6]{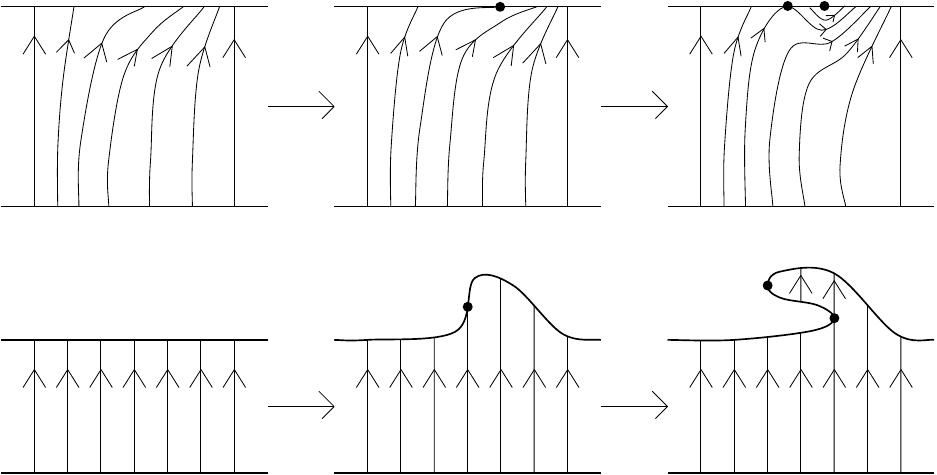}}
{Two ways to visualize the elementary catastrophe leading to the move $s_3$.\label{moves3:fig}}
\end{figure}
Similarly the fourth portion of Fig.~\ref{nongenericflows:fig} gives $s_4$.
The proof is complete.
\end{proof}

\begin{rem}\emph{No univalent vertex of any $G\in\calG(\Genflows)$ joined to a trivalent vertex through the germ of edge labelled $A$ is involved
in any of the moves $\calM(\Genflows)$. But after some move $s_1$ it can perhaps be removed by a move $s_3$ or $s_4$, as
Fig.~\ref{nonstable:fig}
\begin{figure}
\faifig{}
{\includegraphics[scale=0.6]{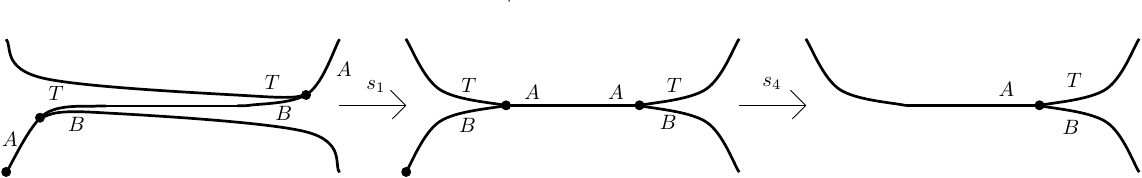}}
{A combination of moves in $\calM(\Genflows)$.
\label{nonstable:fig}}
\end{figure}
shows.}
\end{rem}

We conclude this section with a first application of our combinatorial representation of $\Flows$:

\begin{lemma}
If $G\in\calG(\Genflows)$ has $u$ univalent and $t$ trivalent  vertices, and if $\varphi(\Genflows)$ maps $G$ to a flow $F$ on $\Sigma$,
then $\chi(\Sigma)=\frac12(u-t)$.
\end{lemma}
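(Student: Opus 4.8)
The statement to prove is that if $G\in\calG(\Genflows)$ has $u$ univalent vertices and $t$ trivalent vertices, and $\varphi(\Genflows)$ sends $G$ to a flow $F$ on $\Sigma$, then $\chi(\Sigma)=\tfrac12(u-t)$. The plan is to compute $\chi(\Sigma)$ by the explicit gluing construction defining $\varphi(\Genflows)$. First I would dispose of the vertexless case: if $G$ is a circle, then $\Sigma=S^1\times[0,1]$, so $\chi(\Sigma)=0=\tfrac12(0-0)$, consistent. So assume $G$ has at least one vertex. Recall that $\Sigma$ is a regular neighbourhood of the spine $\calO(F)\cong G$ embedded in $\Sigma$, hence $\Sigma$ deformation retracts onto $G$ and therefore $\chi(\Sigma)=\chi(G)$. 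Since $G$ is a finite graph, $\chi(G)=V-E$ where $V=u+t$ is the number of vertices and $E$ the number of edges.

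The heart of the computation is then a counting of edges. Each edge of $G$ has two endpoints, each of which is either a univalent or a trivalent vertex; counting incidences (edge-ends) in two ways gives $2E=\sum_v \deg(v)=u\cdot 1+t\cdot 3=u+3t$, hence $E=\tfrac12(u+3t)$. (Here one should note $u+3t$ is automatically even, since a generic flow-spine with at least one vertex has $u\ge 1$ and the incidence count forces the parity; alternatively this is seen directly from the gluing construction, where every half-edge created by the dashes of Fig.~\ref{edgecut:fig} is matched.) Substituting, $\chi(\Sigma)=\chi(G)=V-E=(u+t)-\tfrac12(u+3t)=\tfrac12\bigl(2u+2t-u-3t\bigr)=\tfrac12(u-t)$, which is exactly the claimed formula.

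I would present this cleanly as: (i) reduce to $\chi(\Sigma)=\chi(\calO(F))$ using that $\Sigma$ is a regular neighbourhood of its spine (this is the content of the word \emph{flow-spine} established earlier in the section, together with the Lemma asserting $\calO(F)$ is a finite graph); (ii) apply $\chi(G)=V-E$ for a finite graph; (iii) use the handshake/incidence count $2E=u+3t$. A brief alternative, if one prefers to stay at the level of the gluing recipe, is to verify the formula is additive under the gluing of flow portions in Fig.~\ref{flowportions:fig}: each elementary portion contributes a known amount to $\chi$ and to the quantity $\tfrac12(u-t)$, and the gluings identify boundary arcs (which are contractible and glued along contractible intersections, so they do not change $\chi$), so both sides add up the same way; this route avoids invoking the spine property but is bookkeeping-heavier.

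The main obstacle is essentially notational rather than conceptual: one must be careful about what counts as a vertex of $G$ in degenerate situations — in particular, making sure that a univalent vertex attached to a trivalent vertex along the $A$-edge, or very short edges joining two trivalent vertices, are all counted consistently in both $V$ and $E$ — and one must confirm that the deformation retraction of $\Sigma$ onto $\calO(F)$ respects the singular (train-track) smooth structure only up to homotopy, which is all that is needed for $\chi$. Neither point is serious, so the proof is short; the only genuine input is the identification of $\Sigma$ with a regular neighbourhood of $\calO(F)$, which was built into the construction of $\varphi(\Genflows)$.
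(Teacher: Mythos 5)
Your proof is correct and follows exactly the paper's argument: reduce to $\chi(G)$ via the deformation retraction of $\Sigma$ onto its spine, then use the incidence count $2E=u+3t$ and compute $\chi(G)=(u+t)-\frac12(u+3t)=\frac12(u-t)$. The extra remarks (vertexless case, additivity under gluing) are fine but not needed.
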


\begin{proof}
Since $G$ is a deformation retract of $\Sigma$, it suffices to show that $\chi(G)=\frac12(u-t)$.
If $G$ has $e$ edges, counting their ends we get $2e=u+3t$, so
$$\chi(G)=(u+t)-e=(u+t)-\frac12(u+3t)=\frac12(u-t).$$
\end{proof}

\begin{prop}\label{disc:prop}
Let $G\in\calG(\Genflows)$ be mapped by
$\varphi(\Genflows)$ to a flow $F$ on $\Sigma$,
with $\chi(\Sigma)\geqslant 0$. Then a combination of moves in $\calM(\Genflows)$ trasforms $G$ into a $G'$ as follows:
\begin{itemize}
\item If $\Sigma$ is the disc, $G'$ is a segment;
\item If $\Sigma$ is the annulus, $G'$ is one of the graphs in
Fig.~\ref{anngraphs:fig};
\begin{figure}
\faifig{}
{\includegraphics[scale=0.6]{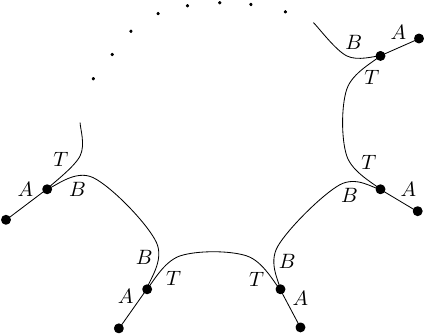}}
{A flow-spines defining a flow on the annulus, with $2k$ univalent and $2k$ trivalent vertices, for $k\in\matN$.\label{anngraphs:fig}}
\end{figure}
\item If $\Sigma$ is the M\"obius strip, $G'$ is the flow-spine of Fig.~\ref{mobius:fig}-left.
\end{itemize}
\end{prop}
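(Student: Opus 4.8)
The plan is to use the classification of surfaces with $\chi(\Sigma)\geqslant 0$ and boundary: these are exactly the disc, the annulus, and the M\"obius strip. By the previous lemma, $\chi(\Sigma)=\tfrac12(u-t)$ where $u,t$ count univalent and trivalent vertices of $G$, so in all three cases $u\geqslant t$, with $u=t$ for the annulus and M\"obius strip and $u=t+2$ for the disc. The strategy is to show that the moves in $\calM(\Genflows)$ suffice to reduce the number of vertices of $G$ down to the minimal model in each case. The key reduction mechanism is the combination of moves noted in the remark just before the lemma: a move $s_1$ creates a trivalent vertex with an $A$-germ attached to a univalent vertex, and then an $s_3$ or $s_4$ move can delete such a univalent vertex together with its trivalent neighbour. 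So the real content is: any $G$ can be simplified, by repeatedly using $s_2$ to reconfigure pairs of trivalent vertices and $s_1$ followed by $s_3/s_4$ to cancel them, until we reach the minimal flow-spine with the given $\chi(\Sigma)$.

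First I would handle the \textbf{disc} case. Here $u=t+2$. I would argue that if $t>0$, one can always find a pair of trivalent vertices (or a single trivalent vertex adjacent to a univalent one) on which a move can be performed to strictly decrease $t$ by $2$ (hence $u$ by $2$), eventually reaching $t=0$, $u=2$, which forces $G$ to be a segment (the only connected graph with two univalent and no trivalent vertices, other than a circle, which is excluded since $\chi=1\ne 0$). The existence of such a reducing move should follow from a combinatorial/topological analysis of the flow-spine embedded in the disc: since $\Sigma$ is planar and simply connected, the train-track structure is very constrained, and one can locate a ``tongue'' — a trivalent vertex whose $A$-germ leads (directly or after trivial simplifications) to a univalent vertex, or two trivalent vertices bounding an innermost region — that is removable by $s_3$, $s_4$, or $s_2$. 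For the \textbf{annulus} and \textbf{M\"obius strip} cases, $u=t$, and I would similarly reduce $t$ in steps of $2$. For the M\"obius strip the terminal case $u=t=0$ is impossible (a circle gives the annulus $S^1\times[0,1]$), so we stop at $u=t=2$, and then I would check directly that up to decorated homeomorphism and the moves there is a unique such flow-spine on the M\"obius strip, namely the one of Fig.~\ref{mobius:fig}-left (this is a small finite case-check over the labellings $T/B/A$). For the annulus, the family in Fig.~\ref{anngraphs:fig} with $2k$ univalent and $2k$ trivalent vertices is exactly the list of reduced models for each value $t=2k$; I would show that every $G$ on the annulus reduces to one of these, but that the $k\to k-1$ reduction may fail when the graph is already the ``circular'' model of Fig.~\ref{anngraphs:fig} (whence the whole family survives rather than collapsing to a single graph), while any other annular flow-spine with the same $t$ can be reduced further or converted into the pictured one.

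The main obstacle I expect is proving that a reducing move is \emph{always available} when the vertex count exceeds the minimum — i.e. ruling out ``locally irreducible'' configurations. The subtlety is that the moves $s_1$–$s_4$ are local, but to guarantee one applies we must exploit global features of how $G$ sits inside $\Sigma$: the cyclic order of the boundary, the orientation of the flow arcs, and the fact that $\partial_+\Sigma$ (from the proof of the lemma on $\calO(F)$) is a union of arcs meeting orbits in a controlled way. For the disc and M\"obius strip this global constraint is rigid enough that a minimality argument on $t$ works cleanly; for the annulus the presence of the non-trivial loop is precisely what produces the infinite family of irreducible models, so the argument there must identify exactly when reduction stalls. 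I would organize the proof so that the disc case is done first as a template, then point out which steps transfer verbatim to the other two and which require the extra bookkeeping of the core circle.
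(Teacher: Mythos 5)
Your overall strategy---induct on the number of vertices, cancelling a univalent--trivalent pair by $s_3$ or $s_4$ and using the combination $s_1$ followed by $s_3/s_4$ of Fig.~\ref{nonstable:fig} when no direct cancellation is visible---is the same as the paper's. But the step you yourself flag as ``the main obstacle'', namely that a reducing move is \emph{always} available above the minimal vertex count, is precisely the content of the proposition, and you do not prove it: the route you sketch (an ``innermost region'' analysis of the train-track embedded in the disc, exploiting planarity, the cyclic order on the boundary and the structure of $\partial_+\Sigma$) is never carried out, and it is not the mechanism that actually works. What works is a purely combinatorial pigeonhole count on the germ labels. Since $G$ is connected with $t>0$, every univalent vertex is joined by its unique edge to a trivalent vertex, distinct univalent vertices occupy distinct germs of edge, and there are only $t$ germs labelled $A$ in all of $G$. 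For the disc $u=t+2>t$, so some univalent vertex is attached through a $T$ or $B$ germ and $s_3$ or $s_4$ applies; we induct. For $u=t$, either the same happens, or every $A$-germ is taken by a univalent vertex, in which case all $T$ and $B$ germs are joined to one another and any $T$-to-$B$ junction triggers the reduction of Fig.~\ref{nonstable:fig}; the configurations in which nothing applies are exactly those of Fig.~\ref{anngraphs:fig} and Fig.~\ref{mobius:fig}-left. No global data about the embedding in $\Sigma$ is needed, and the disc, annulus and M\"obius cases are handled by the same count.

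There are also two arithmetic slips that would derail your bookkeeping. Each $s_3$ or $s_4$ move removes \emph{one} univalent and \emph{one} trivalent vertex, not two of each, so $t$ drops by $1$ at each step; only the difference $u-t$ is preserved, as it must be since it computes $2\chi(\Sigma)$. In particular there is no ``reduction in steps of $2$'' and no parity invariant on $t$: the minimal M\"obius model has $u=t=1$ (one trivalent vertex whose $A$-germ carries the univalent vertex), not $u=t=2$ as you assert, and it is terminal because no move applies to it, not because a count obstructs passing to $u=t=0$.
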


\begin{proof}
Suppose that $G$ has $u$ univalent and $t$ trivalent  vertices.

If $\Sigma$ is the disc, $u-t=2$.
If $t=0$ then $G$ is already a segment. If $t>0$ we note that every univalent vertex $x$
is joined to a trivalent one $y$. This junction is realized through one of the three germs of edge at $y$,
and for the two of them with labels $T$ and $B$ we can apply a move $s_3$ or $s_4$ to reduce $u$ and $t$ by 1.
But there are $u$ univalent vertices and $t=u-2$ germs labeled $A$, so a reducing move $s_3$ or $s_4$
applies somewhere, and we conclude by induction.

We treat the cases of the annulus and the M\"obius strip together.  We have $u=t$, and if $u=t=0$ we have that $G$ is a circle,
which is the flow-spine in Fig.~\ref{anngraphs:fig} for $k=0$. Now suppose $u=t=1$. If the univalent vertex is joined to the $T$ or $B$ germ of edge
at the trivalent vertex, we can apply $s_3$ or $s_4$ getting back to $u=t=0$, otherwise we have the flow-spine of Fig.~\ref{mobius:fig}-left.
Next, take $u=t>1$. If a univalent vertex is joined to a trivalent one through a $T$ or $B$ germ,
which happens in particular if two univalent vertices are joined to the same trivalent one,
we can apply $s_3$ or $s_4$ and proceed inductively.
Hence each univalent vertex is joined to a trivalent one, and conversely, and
all the junctions involve the $A$ germs of edge. To conclude, we note that if there
is a junction between a $T$ germ of a trivalent vertex and a $B$ germ of another one then we can apply a move $s_1$ and then
$s_3$ or $s_4$ to reduce $u=t$, as in Fig.~\ref{nonstable:fig}. The only possibilities left are those of
Fig.~\ref{anngraphs:fig}.
\end{proof}

\begin{cor}\label{disc:cor}
If $\chi(\Sigma)\geqslant 0$ then every traversing flow on $\Sigma$ is homotopic through traversing flows to one and only one of the following:
\begin{itemize}
\item If $\Sigma$ is the disc, the flow in Fig.~\ref{easyflows:fig}-left;
\item If $\Sigma$ is the annulus, one of the flows of the sequence of Fig.~\ref{easyflows:fig}-right;
\begin{figure}
\faifig{}
{\includegraphics[scale=0.6]{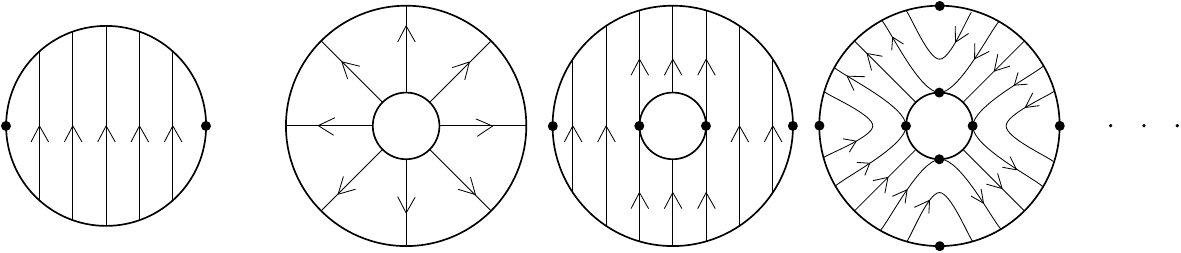}}
{A flow on the disc and a sequence of flows on the annulus.\label{easyflows:fig}}
\end{figure}
\item If $\Sigma$ is the M\"obius strip, the flow in Fig.~\ref{mobius:fig}-right.
\end{itemize}
\end{cor}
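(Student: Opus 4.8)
The plan is to combine the classification of flow-spines up to moves given by Proposition~\ref{disc:prop} with the already-proven equivalence between moves on $\calG(\Genflows)$ and homotopy in $\Flows$ from Theorem~\ref{flows:thm}. The key observation is that Corollary~\ref{disc:cor} is essentially the geometric translation of Proposition~\ref{disc:prop}: the combinatorial normal forms $G'$ correspond under $\varphi(\Genflows)$ precisely to the flows listed in Figures~\ref{easyflows:fig} and~\ref{mobius:fig}-right.

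The existence part goes as follows. Let $F$ be any traversing flow on $\Sigma$ with $\chi(\Sigma)\geqslant0$, so $\Sigma$ is the disc, the annulus, or the M\"obius strip. By surjectivity of $\pi(\Genflows)$ (second item of Theorem~\ref{flows:thm}), $F$ is homotopic through traversing flows to a generic flow $F_0$, which by bijectivity of $\varphi(\Genflows)$ is $\varphi(\Genflows)(G)$ for some $G\in\calG(\Genflows)$. Since $\varphi(\Genflows)$ preserves the underlying surface up to diffeomorphism, the hypothesis $\chi(\Sigma)\geqslant0$ lets us apply Proposition~\ref{disc:prop}: a finite combination of moves in $\calM(\Genflows)$ takes $G$ to the normal form $G'$. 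By the third item of Theorem~\ref{flows:thm}, $\varphi(\Genflows)(G')$ and $\varphi(\Genflows)(G)=F_0$ have the same image in $\Flows$, i.e.\ are homotopic through traversing flows; and one checks directly from the definition of $\varphi(\Genflows)$ that $\varphi(\Genflows)(G')$ is the asserted flow (for the disc, a segment gives the obvious flow of Fig.~\ref{easyflows:fig}-left; for the annulus, the graphs of Fig.~\ref{anngraphs:fig} give the flows of Fig.~\ref{easyflows:fig}-right — this is already illustrated for $G'$ a circle by the first Example after the definition of $\varphi(\Genflows)$, and in general one reads off the surface with flow from Fig.~\ref{flowportions:fig}; for the M\"obius strip, Fig.~\ref{mobius:fig}-left maps to Fig.~\ref{mobius:fig}-right by the Example cited there). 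Composing the two homotopies proves $F$ is homotopic to one of the listed flows.

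For uniqueness, suppose two flows $F_1,F_2$ from the list are homotopic through traversing flows. Each is the image under $\varphi(\Genflows)$ of its normal-form graph $G_1,G_2$ (the segment, some graph of Fig.~\ref{anngraphs:fig}, or Fig.~\ref{mobius:fig}-left), and the listed flows are generic, so this makes sense. By the third item of Theorem~\ref{flows:thm}, $G_1$ and $G_2$ are related by a finite combination of moves in $\calM(\Genflows)$. It therefore suffices to check that no two distinct graphs in the normal-form list are move-equivalent. For the disc the list is a single graph; for the M\"obius strip likewise; and these surfaces are distinguished from the annulus since moves preserve the diffeomorphism type of $\varphi(\Genflows)(G)$, hence preserve both orientability and $\chi$ (equivalently, by the Lemma relating $\chi(\Sigma)$ to $\frac12(u-t)$, the moves preserve $u-t$). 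So the only real content is that the graphs of Fig.~\ref{anngraphs:fig} for different values of $k$ are pairwise non-equivalent.

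The main obstacle is exactly this last point: showing the annular normal forms are genuinely distinct in $\Flows$, i.e.\ that for $k\neq k'$ the flows of Fig.~\ref{easyflows:fig}-right are not homotopic through traversing flows. The integer $k$ counts the number of points where the flow is tangent to each boundary circle, but this is not a homotopy invariant of traversing flows per se (moves $s_1,s_2$ change it), so a more robust invariant is needed. The natural candidate is to count, for a generic traversing flow on the annulus, the number of orbits (= number of points of $\calO(F)$) that meet a fixed boundary component transversally and positively, or more intrinsically the structure of $\calO(F)$: for the annulus the graph of Fig.~\ref{anngraphs:fig} has first Betti number $1$ and $2k$ univalent vertices, and one can try to show that the number of univalent vertices, reduced modulo the moves, stabilizes to an invariant. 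Concretely I would look for a quantity on $\calG(\Genflows)$ invariant under all four moves that separates the graphs of Fig.~\ref{anngraphs:fig}; inspecting Fig.~\ref{s-moves:fig}, the moves $s_3,s_4$ change $u$ and $t$ each by $1$, $s_1$ changes $u$ by $2$ and $t$ by $2$, and $s_2$ fixes both, so $u-t$ and parity arguments alone will not suffice, and one must use the cyclic/linking structure of how the univalent vertices sit on the spine — essentially a winding-number or rotation-number count of the flow around the annulus, which for the $k$-th model equals (up to normalization) $k$. Making this invariant precise and verifying its invariance under $\calM(\Genflows)$, together with its value $k$ on the model flows, is where the work lies; once that is in place, uniqueness follows and the corollary is proved.
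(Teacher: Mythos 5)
Your existence argument and the reduction of uniqueness to the pairwise inequivalence of the annular models match the paper exactly: the corollary is indeed Proposition~\ref{disc:prop} pushed through Theorem~\ref{flows:thm}, and the only substantive point left is that the flows of Fig.~\ref{easyflows:fig}-right are pairwise non-homotopic through traversing flows. But at precisely that point your proof stops: you correctly guess that the distinguishing invariant should be a winding or rotation number, yet you neither define it, nor prove it is a homotopy invariant, nor compute it on the models --- you explicitly defer ``making this invariant precise'' as ``where the work lies''. Since that is the entire mathematical content of the uniqueness claim for the annulus, the argument as written has a genuine gap.

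The paper closes this gap with a short direct argument worth comparing with your plan. Realize the annulus in $\matR^2$ with $S^1$ as one of its boundary components, and let $v$ be a unit vector field generating $F$; then $v|_{S^1}$ is a map from $S^1$ to itself, its degree is well defined, it depends on $F$ only up to homotopy (a homotopy of the generating nowhere-zero field induces a homotopy of the restricted maps), and it takes the distinct values $1,\ 0,\ -1,\ \ldots$ on the flows of Fig.~\ref{easyflows:fig}-right. This works directly at the level of flows, so homotopy invariance is immediate and no check against the moves is needed. Your proposed alternative --- a combinatorial quantity on $\calG(\Genflows)$ verified to be invariant under all four moves of $\calM(\Genflows)$ --- would also suffice in principle but is strictly more work, and you yourself note that the obvious candidates ($u$, $t$, $u-t$, parities) fail without exhibiting one that succeeds.
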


\begin{proof}
The flows in the statement are those defined via $\varphi(\Genflows)$ by the flow-spines of Proposition~\ref{disc:prop}.
We then only need to show that when $\Sigma$ is the annulus the flows of the sequence of Fig.~\ref{easyflows:fig}-right
are pairwise inequivalent. This is because if we realize the annulus in $\matR^2$ with $S^1$ as one of the boundary components,
and $v$ is a unitary vector field generating a flow $F$, then $v$ restricts to a map from $S^1$ to itself, which has a well-defined degree.
This degree only depends on $F$ up to homotopy, and its values for the flows in
the sequence of Fig.~\ref{easyflows:fig}-right are $1,\ 0,\ -1,\ \ldots$
\end{proof}


\section{Apparent contours of immersed curves\\ in a fixed flow}

In this section we show how to describe an immersed curve in a fixed surface $\Sigma$ by means of its apparent contour with respect to a given
generic traversing flow. To begin we prove the following:

\begin{prop}
There always exists a generic traversing flow $F$ on $\Sigma$.
\end{prop}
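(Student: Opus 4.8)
The plan is to construct a generic traversing flow on an arbitrary surface $\Sigma$ in two stages: first produce \emph{some} traversing flow, then invoke genericity by perturbation.

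First I would build a nowhere-zero vector field $v$ on $\Sigma$ all of whose orbits meet $\partial\Sigma$. Since $\Sigma$ is a compact connected surface with non-empty boundary, it is homotopy equivalent to a graph, hence $\chi(\Sigma)\leqslant 1$; in particular $\Sigma$ deformation retracts onto a $1$-complex, and one can use this to see that $\Sigma$ carries a nowhere-zero vector field (the obstruction to a nowhere-zero field is the Euler class, which vanishes because $H^2(\Sigma,\partial\Sigma)$-style obstructions die once $\partial\Sigma\neq\emptyset$; alternatively, double $\Sigma$ and restrict, or simply note that a surface with boundary retracts onto a graph and a field transverse to the fibers of a regular neighbourhood works). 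To guarantee the traversing property I would prefer a cleaner route: take a Morse function $h:\Sigma\to[0,1]$ with no interior critical points and with $\partial\Sigma$ split into $h^{-1}(0)\cup h^{-1}(1)$ plus ``vertical'' arcs — equivalently, realize $\Sigma$ as a regular neighbourhood of a spine $G$ and use the associated handle/collar structure — and let $v=\nabla h$ for a suitable metric. Then every orbit of $v$ is an interval running from $\{h=0\}$ to $\{h=1\}$, possibly tangent to the vertical part of $\partial\Sigma$, so $F$ is a traversing flow in the sense defined above. This is essentially the inverse construction to $\varphi(\Genflows)$: pick any $G\in\calG(\Genflows)$ with $\chi$ matching $\chi(\Sigma)$ and orientability matching that of $\Sigma$ — such a $G$ exists by the classification of surfaces together with the examples already exhibited (segment, the annulus graphs of Fig.~\ref{anngraphs:fig}, the M\"obius graph, and evident generalizations with more trivalent vertices) — and set $F=\varphi(\Genflows)(G)$, which by construction lives on a surface diffeomorphic to $\Sigma$.

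Second, once a traversing flow $F_0$ on $\Sigma$ is in hand, I would apply the genericity result quoted from Katz (the Remark after the definition of genericity): the generic traversing flows form an open dense subset of the space of all traversing flows on $\Sigma$ in the $\mathrm{C}^\infty$ topology, so an arbitrarily small perturbation of $F_0$ is generic and is still traversing. This yields a generic traversing flow on $\Sigma$, as required. Strictly, I should check that ``traversing'' is preserved under small perturbation — this follows because the condition that every orbit exits through $\partial\Sigma$ in finite time is open on a compact surface (a field pointing outward or tangential along the boundary complement, with compactly supported flow time, is a stable condition), a point already implicit in Katz's openness statement.

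The main obstacle is the very first step: producing \emph{any} traversing flow on a completely arbitrary compact surface with boundary, orientable or not. The cleanest argument avoids bare-hands vector-field gymnastics and instead leans on the combinatorial machinery already set up: exhibit, for each diffeomorphism type of $\Sigma$, an explicit decorated graph $G\in\calG(\Genflows)$ with $\varphi(\Genflows)(G)$ supported on that surface — using $\chi(\Sigma)=\frac12(u-t)$ from the Euler-characteristic lemma to get the count of vertices right, and choosing the labelling/embedding to control orientability (compare the contrast between Fig.~\ref{mobius:fig} and Fig.~\ref{annulus:fig}). Then $\varphi(\Genflows)(G)$ is \emph{already} generic, and one does not even need the perturbation step — though it is worth remarking that the perturbation argument gives a self-contained alternative once one knows traversing flows exist at all.
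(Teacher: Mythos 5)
Your proposal circles the right ideas, but the route you actually commit to has a genuine gap. You propose to realize $\Sigma$ as $\varphi(\Genflows)(G)$ for an abstract decorated graph $G\in\calG(\Genflows)$ chosen so that $\chi$ and orientability match those of $\Sigma$. This is insufficient on two counts. First, the diffeomorphism type of a compact surface with boundary is \emph{not} determined by its Euler characteristic and orientability: the number of boundary components is an independent invariant (a once-punctured torus and a three-holed sphere are both orientable with $\chi=-1$). Second, and more seriously, the assertion that every diffeomorphism type is realized by some $G$ --- via ``evident generalizations'' of the low-complexity examples --- is essentially the statement being proved: since $\varphi(\Genflows)$ is a bijection onto the set of generic traversing flows, surjectivity of the construction onto all surfaces \emph{is} the existence proposition. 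You never exhibit the required graphs, and controlling the number of boundary components (and orientability) of the surface produced by an abstract decorated graph is exactly the delicate point; the paper explicitly warns that the abstract homeomorphism type of a spine does not determine $\Sigma$, only its embedding in $\Sigma$ does. Your Euler-class remark has the same defect from the other side: a nowhere-zero field need not be traversing (it can have closed or non-proper orbits), so it does not even supply the input to the perturbation step.

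The fix is the alternative you mention only in passing and then abandon: work with a spine \emph{embedded in the given} $\Sigma$. The paper collapses a triangulation of $\Sigma$ to obtain a graph $X\subset\Sigma$ with uni- and trivalent vertices of which $\Sigma$ is a regular neighbourhood, then defines the field explicitly on neighbourhoods of the vertices and extends it across the edges; the local models are already the generic ones, so no perturbation is needed and no matching of invariants ever arises, because the flow is built on $\Sigma$ itself rather than on an abstract surface that must afterwards be identified with $\Sigma$. Your Morse-function variant (a function with no interior critical points, gradient-like field) can be made to work, but constructing such a function on an arbitrary non-orientable $\Sigma$ with corners along $\partial\Sigma$ again amounts to the embedded-spine construction, so you should carry that construction out rather than cite it as ``equivalent.''
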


\begin{proof}
Taking a triangulation of $\Sigma$ and collapsing it as long as no triangle is left (recall that $\partial\Sigma\neq\emptyset$) we see that
there exists a graph $X\subset\Sigma$ which is a spine of $\Sigma$, namely $\Sigma$ can be identified to a
regular neighbourhood of $X$ in $\Sigma$ (but the abstract homeomorphism type of $X$ does not determine $\Sigma$, one needs the embedding of $X$ in $\Sigma$).
We can assume that $X$ does not reduce to a point, that it has vertices of valence $1$ and $3$ only,
that every edge of $X$ is a smooth arc, and that the edges are transverse to
each other at the vertices. Then we can construct $F$ on regular neighbourhoods of the vertices by making some arbitarty choice as in Fig.~\ref{arbatverts:fig},
\begin{figure}
\faifig{}
{\includegraphics[scale=0.6]{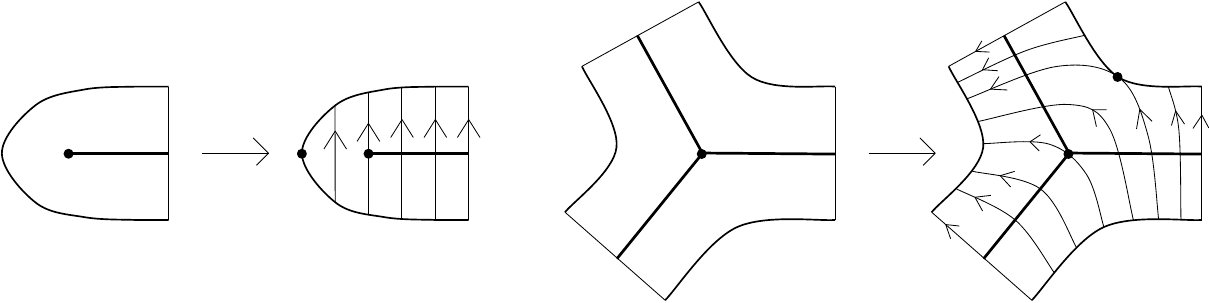}}
{A choice of $F$ near the vertices.\label{arbatverts:fig}}
\end{figure}
and finally extend $F$ across the edges, as in Fig.~\ref{extendacross:fig}.
\begin{figure}
\faifig{}
{\includegraphics[scale=0.6]{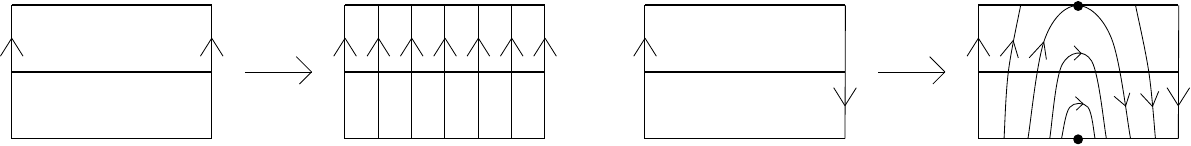}}
{Extending $F$ across the edges.\label{extendacross:fig}}
\end{figure}
\end{proof}

In the rest of the section we fix a generic traversing flow $F$ on $\Sigma$.
Recall that a \emph{properly immersed curve} in $\Sigma$ is the image $C$ of a smooth map
$j:c\to\Sigma$, where $c$ is an abstract curve, $j$ has injective differential, $j^{-1}(\partial\Sigma)=\partial c$, and
$C$ is transverse to $\partial\Sigma$. Moreover
$\Curves$ is the set of all such $C$'s up to the variation generated by a
homotopy of $j$ through proper immersions.

\begin{defn}\label{gencurve:defn}
\emph{A properly immersed curve $C$ in $\Sigma$ is \emph{generic} for $F$ if:
\begin{itemize}
\item[(D)] The only multiple points of $C$ are transverse double ones not on $\partial\Sigma$, so they form a finite set $\calD(C)$;
\item[(X)] The points of $\Sigma$ in $F$ (the point-orbits) do not belong to $C$;
\item Every point $P$ of an arc element $f$ of $F$ satisfies one following:
\begin{itemize}
\item[(0)] $P\not\in C$, and $P\not\in\partial\Sigma$ or $P\in\partial\Sigma$ and $f$ is transverse to $\partial\Sigma$ at $P$;
\item[(1)] $P\in C$, $P\not\in\partial\Sigma$, $P\not\in\calD(C)$ and $C$ is transverse to $f$ at $P$;
\item[(2)] $P\in C$, $P\not\in\partial\Sigma$, $P\not\in\calD(C)$ and $C$ is tangent to $f$ at $P$ with an order-2 contact;
\item[(3)] $P\in C$, $P\not\in\partial\Sigma$, $P\in\calD(C)$ and both the strands of $C$ at $P$ are transverse to $f$;
\item[(4)] $P\in C$, $P\in\partial\Sigma$ and $C$ is transverse to $f$ at $P$;
\item[(5)] $P\not\in C$, $P\in\partial\Sigma$ and $f$ is tangent to $\partial\Sigma$ at $P$ (with order-2 contact, by the genericity of $F$).
\end{itemize}
Moreover, every $f$ contains at most one point of the types (2) to (5).
\end{itemize}}
\end{defn}

Pictures illustrating the point types of a generic curve
will be provided in Fig.~\ref{curveportions:fig}
together with their combinatorial encoding.

Now consider the set of all properly immersed curves in $\Sigma$ that are images of some $j:c\to \Sigma$ for a fixed $c$.
This is the quotient of the set of all such $j$'s, where two are identified if they have the same image.
This space of $j$'s can be given the $\textrm{C}^{\infty}$ topology, so the set of all curves modeled on $c$ can be given the quotient topology,
and the set of all curves the topology of a disjoint union as $c$ varies.

\begin{prop}\label{gencurves:prop}
In the space of all properly immersed curves in $\Sigma$ the subspace of the generic ones is open and dense.
\end{prop}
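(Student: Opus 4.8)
The plan is to show that both openness and density of the generic locus follow from standard transversality arguments, applied to the finitely many conditions in Definition~\ref{gencurve:defn}. First I would fix the model curve $c$ and work in the space $\mathcal{J}$ of all proper immersions $j:c\to\Sigma$ with the $\textrm{C}^\infty$ topology; since genericity of $C$ is a condition on the image only and the conditions are invariant under reparametrization, it suffices to prove that the generic $j$'s form an open dense subset of $\mathcal{J}$, and then pass to the quotient (openness and density descend, since the quotient map is open and continuous). The fixed flow $F$ is generic, so in particular its tangency points with $\partial\Sigma$ are isolated and its point-orbits form a finite set; these are fixed throughout.

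Next I would unpack each defining condition and exhibit it as avoidance of a stratified ``bad set'' of positive codimension, or as transversality to a submanifold, in an appropriate jet space. Condition (D)---only transverse double points, none on $\partial\Sigma$---is the classical Whitney genericity for immersions of a $1$-manifold into a surface and is open--dense by the multijet transversality theorem. Condition (X) and part of (0) (point-orbits not on $C$, $C$ transverse to $\partial\Sigma$): avoiding the finite set of point-orbits is codimension $2$ hence generic, and transversality to $\partial\Sigma$ is open--dense. The tangency conditions (1),(2) and the exclusions in (2),(3) are handled by viewing the flow $F$ as a (singular, near $\partial\Sigma$) foliation and using jet transversality: the locus where $C$ is tangent to $F$ is generically a finite set of points (condition (1) holds off it), at each such point the contact is generically of order exactly $2$ (giving (2)), and generically no such tangency point is a double point of $C$ nor lies on $\partial\Sigma$ nor coincides with a flow tangency to $\partial\Sigma$. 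Condition (3) at double points---both strands transverse to $F$---is again a transversality statement excluding the codimension-$\geqslant 1$ coincidence of a tangency with a double point. Condition (4) says $C$ meets $\partial\Sigma$ transversally to $F$; since $F$ is generic this is a further open--dense constraint on the finitely many boundary intersection points. Finally the last clause---each orbit contains at most one point of types (2)--(5)---excludes coincidences between pairs of these finite ``special'' point-sets lying on the same orbit, which is a finite union of positive-codimension conditions and hence generic; here multijet transversality applied on pairs of points of $c$ (and on a point of $c$ paired with the fixed boundary-tangency points of $F$) is the relevant tool.

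The assembly step is then routine: each condition defines an open dense subset of $\mathcal{J}$ (openness because each is a ``transverse/avoidance'' condition, stable under small $\textrm{C}^\infty$ perturbation, using compactness of $c$ and $\Sigma$ and finiteness of the relevant point-sets; density by Thom/multijet transversality), and the generic locus is the finite intersection of these sets, hence open and dense. Passing to the disjoint union over all model curves $c$ preserves open--density in the disjoint-union topology, completing the proof.

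The main obstacle, and the step I would spend the most care on, is the tangency analysis with respect to $F$ near $\partial\Sigma$: unlike the interior, where $F$ is a genuine nonsingular foliation and $1$-jet transversality gives everything, near a convex or concave tangency of $F$ to $\partial\Sigma$ the ``orbit through $P$'' degenerates and the naive jet bundle setup breaks down. I would deal with this by working in the straightening charts provided in the definition of a generic flow (the three model neighbourhoods $\overline U\subset\matR^2$ with vertical flow), where ``tangency of $C$ to $F$'' becomes ``horizontal tangency of $C$'', a clean jet condition; the only subtlety is then ensuring that the finitely many boundary-tangency points of $F$ are generically avoided by the tangency points, double points, and boundary points of $C$, which is a codimension count once one is in these charts. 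A secondary technical point is justifying \emph{openness} of the last clause (at most one special point per orbit) when the orbit is an arc touching $\partial\Sigma$ tangentially; again the straightening charts reduce this to an elementary statement about finitely many points on finitely many vertical segments.
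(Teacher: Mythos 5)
Your proposal is correct and is essentially the standard transversality argument that the paper's own proof merely asserts in two sentences (``after an arbitrarily small perturbation the image of $j$ satisfies the genericity conditions, and a generic image stays generic under small perturbation''); you supply the multijet-transversality details, the passage to the quotient by reparametrization, and the boundary-tangency charts that the paper leaves implicit. No gap; you have simply done more work than the published proof.
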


\begin{proof}
Let $C$ be the image of $j:c\to\Sigma$. Then, after an arbitrarily $\textrm{C}^{\infty}$ small perturbation, the image of $j$
satisfies the genericity conditions. And if $C$ is already generic, after any suitably $\textrm{C}^{\infty}$ small perturbation, the image of $j$ is still generic.
\end{proof}

We now define $\Gencurves$ as the set of all properly immersed curves in $\Sigma$ that are generic for $F$, up to diffeomorphisms of $\Sigma$ mapping $F$ to $F$.

\begin{defn}\label{gengraph:defn}
\emph{Let $G\in\calG(\Genflows)$ be the graph that defines $F$ on $\Sigma$ via $\varphi(\Genflows)$.
We define $\calG(\Gencurves)$ as the set of finite connected graphs $\Gamma$ obtained by adding to $G$ some bivalent vertices and a decoration
given by the following objects and subject to the following restrictions. Objects:
\begin{itemize}
\item A \emph{weight} in $\matN$ for every edge of $\Gamma$;
\item A \emph{height} in $\matN$ or in $\{+,-\}$ for every bivalent vertex.
\end{itemize}
Restrictions:
\begin{itemize}
\item At each trivalent vertex, the weight of the edge with label $A$ equals the sum of the weights of the edges with labels $T$ and $B$;
\item The weight of an edge with a univalent end is $0$;
\item If two edges with weights $n$ and $m$ share a bivalent vertex with height $\ell$, only the following can happen:
\begin{itemize}
\item[(2)] $m=n+2$, $1\leqslant \ell\leqslant n+1$.
\item[(3)] $m=n$ and  $n\geqslant2$, $\ell\in\matN$, $1\leqslant \ell\leqslant n-1$;
\item[(4)] $m=n+1$, $\ell=\pm$.
\end{itemize}
\end{itemize}
Such a $\Gamma$ is viewed up to homeomorphisms respecting the decoration (including the $T/B/A$ labels
of the germs of edge at the trivalent vertices).}
\end{defn}

We now describe the construction that associates to each element of $\calG(\Gencurves)$ a properly immersed curve in $\Sigma$ generic for $F$.

\begin{defn}
\emph{$\varphi(\Gencurves):\calG(\Gencurves)\to \Gencurves$ is
the map that associates to a graph $\Gamma$ the curve $C$ as follows.
If $\Gamma$ has no vertices, so it is a circle
of weight $n$, then $\Sigma$ is $S^1\times[0,1]$ with $F$ is parallel to $[0,1]$,
and $C$ consists of $n$ parallel copies of $S^1$.
Otherwise we cut each edge of $\Gamma$ into halves by means of a small dash and we enhance
the construction that associates $F$ on $\Sigma$ to $G$, shown in Fig.~\ref{flowportions:fig}, by adding to it the curve $C$, as shown in
Fig.~\ref{curveportions:fig}.
\begin{figure}
\faifig
{}
{\includegraphics[scale=0.6]{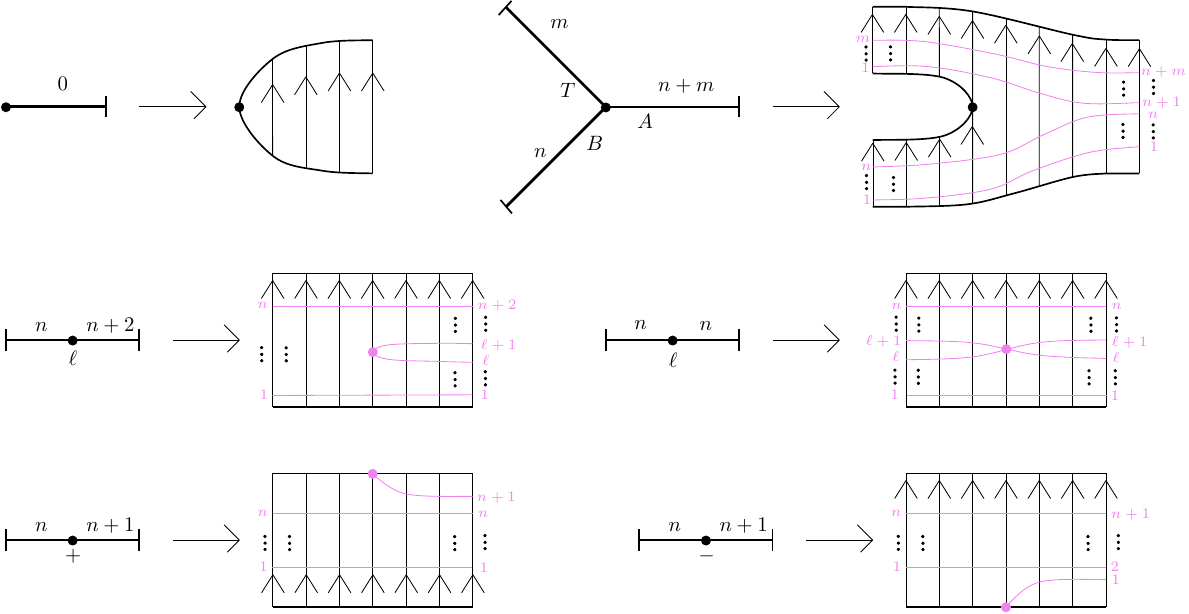}}
{Portions of a curve $C$ corresponding to portions of graph. All the points in these pictures are of type (0) if they do not belong to $C$,
and of type (1) if they do, except the following.
Top-left: a point-orbit; top-right: a point of type (5);
middle-left: a point of type (2); middle-right: a point of type (3);
bottom-left and bottom-right: a point of type (4).
\label{curveportions:fig}}
\end{figure}}
\end{defn}

\begin{rem}\label{contour:explanation:rem}
\emph{The following interpretation explains in what sense
the labelling of $\Gamma$ represents the \emph{apparent contour} of the associated $C$.
We can view the oriented orbits of $F$ as light rays entering $\Sigma$ through $\partial_+\Sigma$ and leaving it through
$\partial_-\Sigma=\partial\Sigma\setminus\partial_+\Sigma$, and we can imagine $C$ to be made of a semi-transparent material.
Note that the graph $G$ underlying $\Gamma$ can be viewed as $\partial_-\Sigma$ together
with the points at which $F$ is convexly tangent to $\partial\Sigma$. Now let an observer $O$ placed at a point $P$ of $\partial_-\Sigma$ look
at $C$ along the light ray $f\in F$ that ends at $P$. This is what $O$ sees:
\begin{itemize}
\item If $f$ contains only points of type (0) or (1) then $P$ is not a vertex of $\Gamma$ and $O$ will perceive
a light that gets dimmer for every stratum of $C$ the light ray $f$ has crossed. So $O$ can measure the intensity of the light reaching $P$, and
tell this number $n$ of strata. Along an edge of $\Gamma$ the value of $n$ does not change, and it is the weight of the edge;
\item If $f$ contains points of type (0) and (1) and one of type (2) then $P$ is a bivalent vertex, the weights of the edges incident to $P$ are
$n$ and $n+2$, and $O$ sees the point at which $C$ bends (precisely the apparent contour of $C$). Moreover $O$
can measure how dim the image of this point is, and hence tells how many strata of $C$ the light ray $f$ has crossed after it ($n-\ell+1$ in our notation,
if $P$ has label $\ell$);
\item If $f$ contains points of type (0) and (1) and one of type (3) then $P$ is a bivalent vertex, the weights of the edges incident to $P$ are both
$n$, and $O$ sees the point at which $C$ crosses itself, and tell how many strata of $C$ its image crosses before reaching $P$ ($n-\ell-1$ if
$P$ has label $\ell$);
\item If $f$ contains points of type (0) and (1) and one of type (4) then $P$ is a bivalent vertex, the weights of the edges incident to $P$ are
$n$ and $n+1$, and $O$ either directly sees a branch of $C$ coming to $P$, in which case $P$ has label $+$, or it sees (perhaps dimly, it does not
matter how much) a branch of $C$ reaching the first end of $f$, and $P$ has label $-$;
\item If $f$ contains points of type (0) and (1) and one of type (5) then $P$ corresponds in $G$ to a trivalent vertex, and the weights of the edges labeled $T$ and $B$ sum up
to give the weight of the edge labeled $A$.
\end{itemize}}
\end{rem}

\begin{rem}\label{graphs:for:embedded:closed:curves:rem}
\emph{$\Gamma\in\calG(\Gencurves)$ gives a properly \emph{embedded} curve if it has no bivalent vertex of type (3), a \emph{closed} immersed curve if it has
no bivalent vertex of type (4), and a \emph{closed embedded} curve if has no bivalent vertex of type (3) or (4).}
\end{rem}

Recall that $\pi(\Gencurves): \Gencurves\to \Curves$ is the natural projection (see the introduction).

\begin{prop}\label{curves:prop}\
\begin{itemize}
\item $\varphi(\Gencurves):\calG(\Gencurves)\to \Gencurves$ is bijective;
\item $\pi(\Gencurves): \Gencurves\to \Curves$ is surjective.
\end{itemize}
\end{prop}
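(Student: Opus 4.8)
The plan is to establish the two items separately, the first by an explicit inverse construction mirroring the proof of Theorem~\ref{flows:thm}, the second by a genericity/perturbation argument like Proposition~\ref{gencurves:prop}.

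For bijectivity of $\varphi(\Gencurves)$, first I would observe that $\varphi(\Gencurves)$ does land in $\Gencurves$: the local models of Fig.~\ref{curveportions:fig} are exactly the point types (0)--(5) of Definition~\ref{gencurve:defn}, and the arithmetic restrictions in Definition~\ref{gengraph:defn} are precisely what is needed for the portions to glue along boundary arcs with matching numbers of strands (the weight of an edge is the number of strands of $C$ crossing the corresponding flow-orbit, which jumps by $0,\pm 1,\pm 2$ at the various vertex types). Next I would construct the inverse: given $C\in\Gencurves$, take the flow-spine $G=\calO(F)$, and on each edge of $G$ record as weight the common number of strands of $C$ met by the orbits over that edge (well-defined by genericity condition on point types (0)/(1) away from the special points), and insert a bivalent vertex for each orbit carrying a point of type (2), (3) or (4), decorated with the height $\ell$ read off as in Remark~\ref{contour:explanation:rem} (or $\pm$ in the type-(4) case). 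One checks the restrictions of Definition~\ref{gengraph:defn} hold, that this $\Gamma$ satisfies $\varphi(\Gencurves)(\Gamma)=C$ up to diffeomorphism fixing $F$, and that it depends only on $C$ up to decorated homeomorphism; conversely $\varphi(\Gencurves)$ followed by this recipe is the identity on $\calG(\Gencurves)$. The circle case ($\Sigma=S^1\times[0,1]$) is handled separately and trivially, exactly as for flows.

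For surjectivity of $\pi(\Gencurves)$, given an arbitrary properly immersed curve $C$ in $\Sigma$ I would invoke Proposition~\ref{gencurves:prop}: an arbitrarily $\textrm{C}^\infty$-small perturbation of a defining map $j$ makes its image generic for the fixed $F$, and such a perturbation is by definition a homotopy of $j$ through proper immersions, hence does not change the class in $\Curves$. (One should note that genericity for $F$ is an open-dense condition in the space of $j$'s with $F$ held fixed, which is exactly the content of Proposition~\ref{gencurves:prop}, so no genericity assumption on $C$ is lost.)

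The main obstacle is the first bullet, specifically the verification that the decorated orbit space is genuinely the two-sided inverse: one must check that every admissible combination of weights and heights satisfying the restrictions in Definition~\ref{gengraph:defn} is actually realized by gluing the portions of Fig.~\ref{curveportions:fig}, and that the gluing is unique up to diffeomorphism fixing $F$ --- i.e.\ that the combinatorial data captures $C$ up to proper homotopy within $F$ with no residual ambiguity. The delicate points are that the height $\ell$ at a type-(2) or type-(3) vertex is forced (the curve bends or crosses at a specific stratum, and Remark~\ref{contour:explanation:rem} pins down which), and that at a trivalent vertex of $G$ the splitting of the weight of the $A$-edge into the $T$- and $B$-weights is dictated by how the strands of $C$ distribute across the convex tangency --- so there is no freedom beyond the recorded data. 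Everything else is a routine local check against the finite list of models.
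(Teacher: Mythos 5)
Your proposal is correct and follows the same route as the paper: the first item is proved by exhibiting the inverse map that reads the decorated graph off a generic curve (the paper phrases this as ``reversing the arrows in Fig.~\ref{curveportions:fig}''), and the second item is deduced from the density of generic curves in Proposition~\ref{gencurves:prop}. Your write-up merely spells out in more detail the local verifications that the paper leaves implicit.
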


\begin{proof}
For the first item, we have associated to $\Gamma$ a curve $C$ which is well-defined up to diffeomorphisms of $\Sigma$ preserving $F$,
and only depends on $\Gamma$ up to decorated homeomorphisms.  Moreover the construction can be reversed, starting
from a generic curve $C$ and getting a decorated graph $\Gamma$, just by reversing the arrows in Fig.~\ref{curveportions:fig}.
The second item follows from Proposition~\ref{gencurves:prop}.
\end{proof}

We conclude this section by proving the necessity of all the labels we have introduced on a graph $G\in\calG(\Genflows)$ encoding $F$ on $\Sigma$ to turn
it into a $\Gamma\in\calG(\Gencurves)$ encoding a curve $C$ in $\Sigma$ generic for $F$. In fact, it will be enough to do so for $\Sigma=S^1\times[0,1]$
and $F$ parallel to $[0,1]$, so $G$ is a circle. A weight $n$ on $G$ gives a curve with $n$ components, so $n$ is necessary.
Fig.~\ref{labelsnec:fig} shows that the other labels are necessary as well.
\begin{figure}
\faifig
{}
{\includegraphics[scale=0.6]{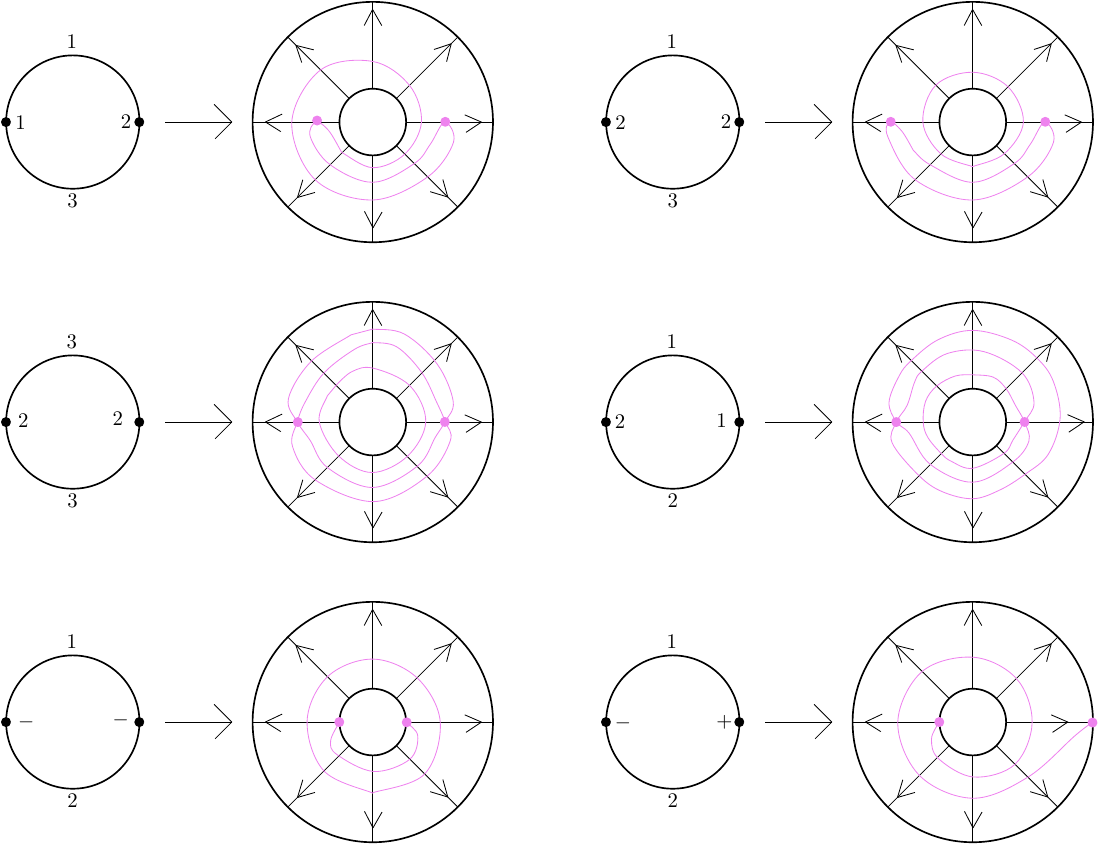}}
{Each of the three lines shows two graphs in $\calG(\Gencurves)$ that differ from each other for the label of one bivalent vertex,
and the associated curves. The difference is respectively at vertices giving points of type (2), (3) and (4).
\label{labelsnec:fig}}
\end{figure}


\section{Moves for a fixed flow}

In this section the setting will be the same as in the previous one, namely we fix $\Sigma$, a generic flow $F$ on $\Sigma$
and the flow-spine $G\in\calG(\Genflows)$ defining $F$ on $\Sigma$. Recall that $\calG(\Gencurves)$ is the set of graphs $\Gamma$
obtained by adding decorations to $G$ as in Definition~\ref{gengraph:defn}. We introduce on $\calG(\Gencurves)$
the set $\calM(\Gencurves)$ of the moves $i^*_*$ defined in
Figg.~\ref{D-moves:fig} to~\ref{4-5-moves:fig},
where the symbol over each $\leftrightarrow$ is the name of the move, and the formula under it is the condition that the involved weights and heights must
satisfy in order for the move to be applicable.

\begin{rem}\label{universal:moves:rem}
\emph{The moves in $\calM(\Gencurves)$ do not actually depend on $\Sigma$ and $F$: all of them are finite local combinatorial moves,
that would be the same for any $\Sigma$ and $F$.}
\end{rem}

\begin{figure}
\faifig{}
{\includegraphics[scale=0.6]{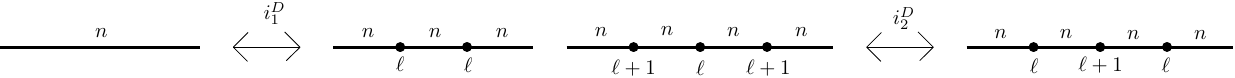}}
{The moves $i^D_j$.\label{D-moves:fig}}
\end{figure}

\begin{figure}
\faifig{}
{\includegraphics[scale=0.6]{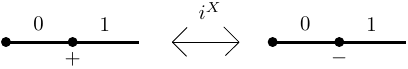}}
{The move $i^X$.\label{X-move:fig}}
\end{figure}

\begin{figure}
\faifig{}
{\includegraphics[scale=0.6]{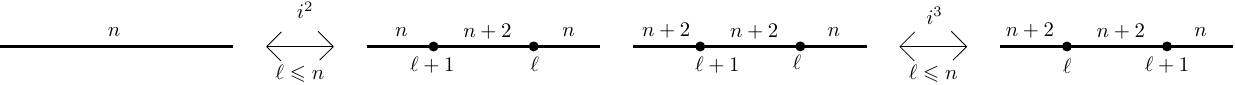}}
{The moves $i^2$ and $i^3$.\label{2-move+3-move:fig}}
\end{figure}

\begin{figure}
\faifig{}
{\includegraphics[scale=0.6]{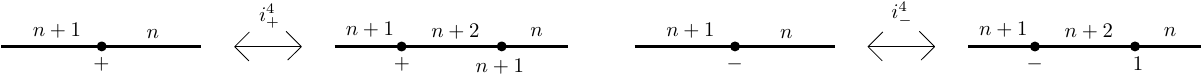}}
{The move $i^4_{\pm}$.\label{4-moves:fig}}
\end{figure}

\begin{figure}
\faifig{}
{\includegraphics[scale=0.6]{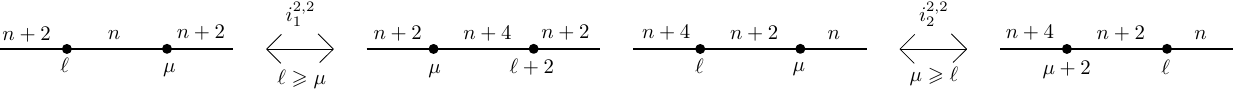}}
{The moves $i^{2,2}_j$.\label{2-2-moves:fig}}
\end{figure}

\begin{figure}
\faifig{}
{\includegraphics[scale=0.6]{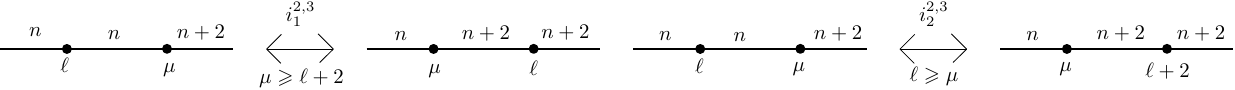}}
{The moves $i^{2,3}_j$.\label{2-3-moves:fig}}
\end{figure}

\begin{figure}
\faifig{}
{\includegraphics[scale=0.6]{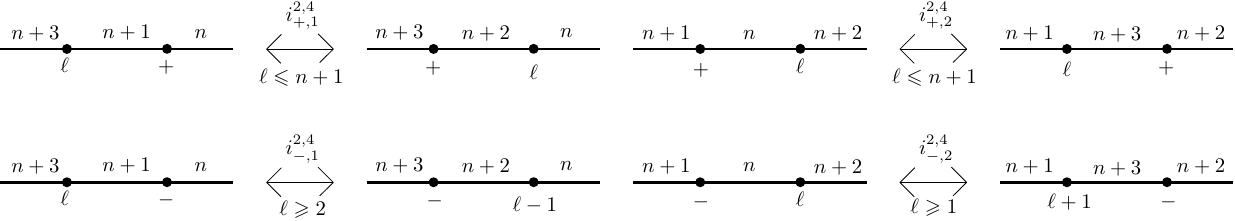}}
{The moves $i^{2,4}_{\pm,j}$.\label{2-4-moves:fig}}
\end{figure}

\begin{figure}
\faifig{}
{\includegraphics[scale=0.6]{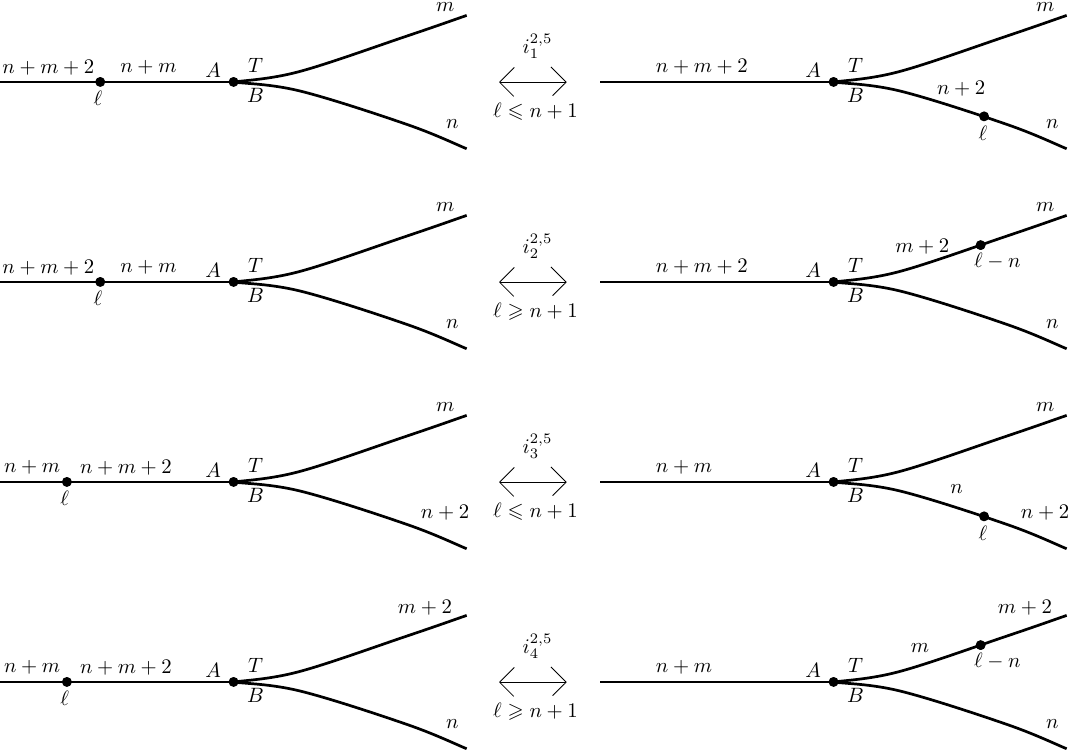}}
{The moves $i^{2,5}_j$.\label{2-5-moves:fig}}
\end{figure}

\begin{figure}
\faifig{}
{\includegraphics[scale=0.6]{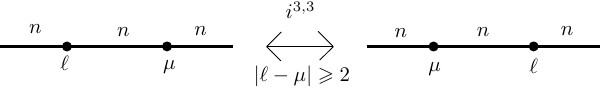}}
{The move $i^{3,3}$.\label{3-3-move:fig}}
\end{figure}

\begin{figure}
\faifig{}
{\includegraphics[scale=0.6]{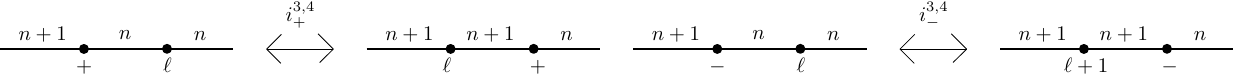}}
{The moves $i^{3,4}_\pm$.\label{3-4-moves:fig}}
\end{figure}

\begin{figure}
\faifig{}
{\includegraphics[scale=0.6]{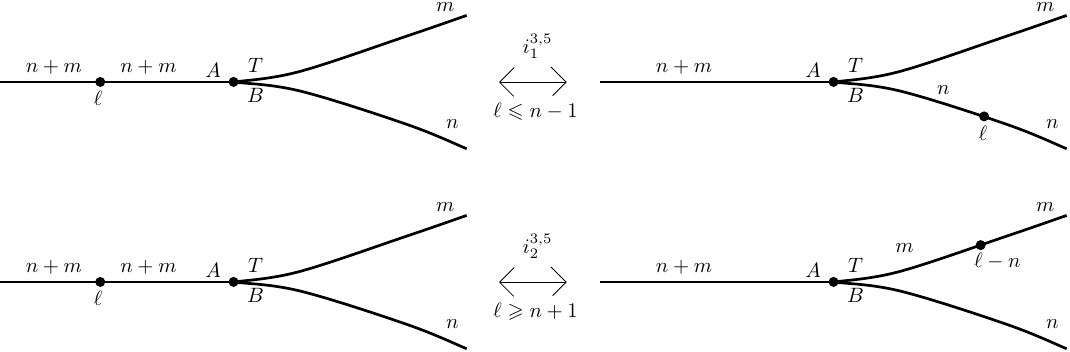}}
{The moves $i^{3,5}_j$.\label{3-5-moves:fig}}
\end{figure}

\begin{figure}
\faifig{}
{\includegraphics[scale=0.6]{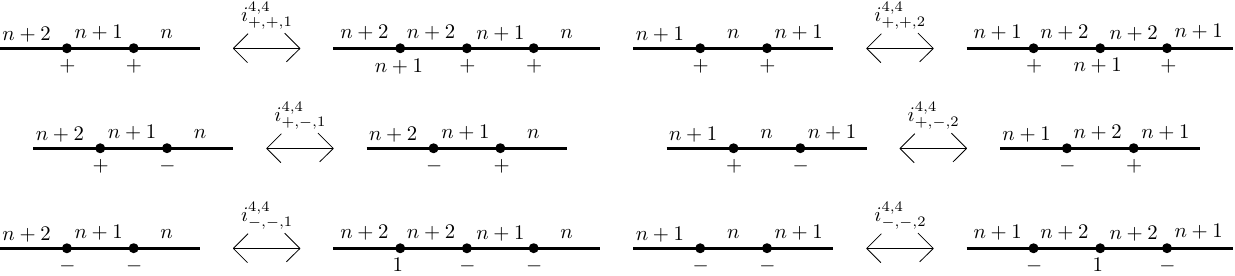}}
{The moves $i^{4,4}_{\pm,\pm,j}$.\label{4-4-moves:fig}}
\end{figure}

\begin{figure}
\faifig{}
{\includegraphics[scale=0.6]{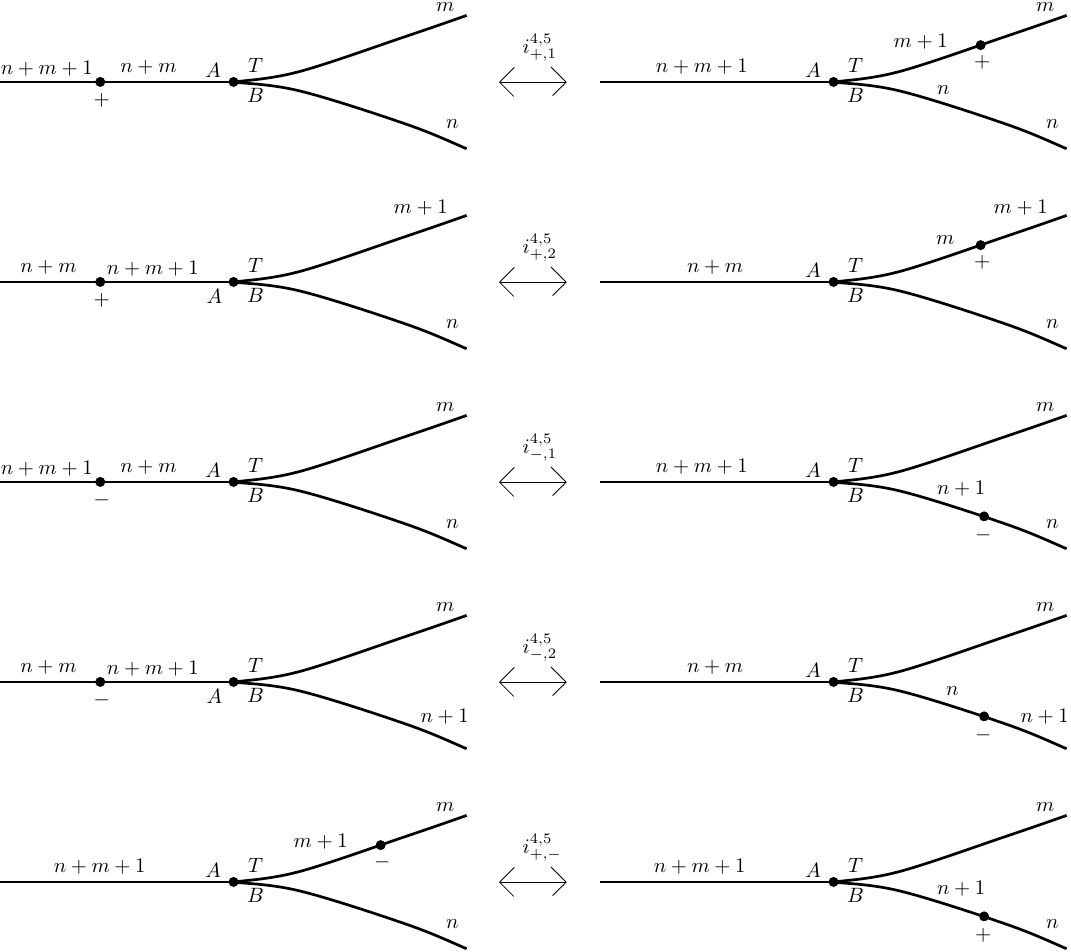}}
{The moves $i^{4,5}_{\pm,j}$ and $i^{4,5}_{+,-}$.\label{4-5-moves:fig}}
\end{figure}

\begin{rem}\label{left-right:rem}
\emph{All our moves are described as $\texttt{L}\mathop{\leftrightarrow}\limits^i\texttt{R}$ where \texttt{L} and \texttt{R} are labelled planar diagrams.
Now let $\reflectbox{\texttt{L}}$ and $\reflectbox{\texttt{R}}$ be the same diagrams vertically mirrored.
Then $\texttt{L}\mathop{\leftrightarrow}\limits^i\texttt{R}$ stands for any of the moves
\begin{center}
$\texttt{L}\to\texttt{R}$ \qquad
$\texttt{R}\to\texttt{L}$ \qquad
$\reflectbox{\texttt{L}}\to\reflectbox{\texttt{R}}$ \qquad
$\reflectbox{\texttt{R}}\to\reflectbox{\texttt{L}}$.
\end{center}
Note in particular that if $\reflectbox{\texttt{L}}=\texttt{L}$ but $\reflectbox{\texttt{R}}\neq \texttt{R}$ there are two ways to apply $i$ to \texttt{L},
and similarly if $\reflectbox{\texttt{R}}=\texttt{R}$ but $\reflectbox{\texttt{L}}\neq \texttt{L}$.
Observe also that the truth of $\reflectbox{\texttt{L}}=\texttt{L}$ and $\reflectbox{\texttt{R}}=\texttt{R}$
can depend on the labels (for $i^{2,2}_1$, for instance, we have $\reflectbox{\texttt{L}}=\texttt{L}$, $\reflectbox{\texttt{R}}\neq \texttt{R}$ for $\mu=\ell$,
while $\reflectbox{\texttt{L}}\neq \texttt{L}$, $\reflectbox{\texttt{R}}=\texttt{R}$ for $\mu=\ell+2$, and
$\reflectbox{\texttt{L}}\neq \texttt{L}$, $\reflectbox{\texttt{R}}\neq \texttt{R}$ otherwise).}
\end{rem}

\begin{thm}\label{curves:thm}\
\begin{itemize}
\item $\varphi(\Gencurves):\calG(\Gencurves)\to \Gencurves$ is bijective;
\item $\pi(\Gencurves): \Gencurves\to \Curves$ is surjective;
\item Two graphs in $\calG(\Gencurves)$ have the same image in $\Curves$ under the composition $\pi(\Gencurves)\compo\varphi(\Gencurves)$
if and only if they are related by a finite combination of moves in $\calM(\Gencurves)$.
\end{itemize}
\end{thm}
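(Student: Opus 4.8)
The first two items are precisely Proposition~\ref{curves:prop}, so the plan is to prove only the third, by an argument parallel to the proof of the third item of Theorem~\ref{flows:thm}. Throughout I would use the bijectivity of $\varphi(\Gencurves)$ to pass freely between a decorated graph and the generic-for-$F$ curve it encodes. For the ``if'' direction I would check move by move that if $\Gamma'$ is obtained from $\Gamma$ by a move in $\calM(\Gencurves)$ then $\varphi(\Gencurves)(\Gamma)$ and $\varphi(\Gencurves)(\Gamma')$ are homotopic through proper immersions. The key observation is that $\varphi(\Gencurves)$ is built by cutting the graph along small dashes, attaching to each resulting decorated portion a portion of $(\Sigma,F,C)$ as in Figure~\ref{curveportions:fig}, and regluing; every move alters the graph only inside a sub-configuration bounded by finitely many dashes, leaving the complement and the gluing data on the frontier untouched. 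Hence the two curves coincide outside a disc of $\Sigma$ and inside it differ by the compactly supported homotopy of properly immersed arcs visible in the move picture (a birth or death of a pair of tangencies, or of a pair of double points, the slide of a special point past a univalent or trivalent vertex, and so on). Concatenating over a finite sequence of moves gives equality of the images under $\pi(\Gencurves)\compo\varphi(\Gencurves)$.

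For the ``only if'' direction, suppose $\Gamma_0,\Gamma_1\in\calG(\Gencurves)$ have the same image in $\Curves$, set $C_i=\varphi(\Gencurves)(\Gamma_i)$, and --- since the equivalence relation defining $\Curves$ is generated by homotopies through proper immersions, and a finite chain of such homotopies can be concatenated (after replacing the intermediate curves by nearby generic ones, which exist by Proposition~\ref{gencurves:prop}) --- reduce to the case of a single homotopy $\{C_t\}_{t\in[0,1]}$ through proper immersions from $C_0$ to $C_1$. The crucial step, exactly as in Theorem~\ref{flows:thm}, is a transversality argument: the set of proper immersions whose image fails to be generic for $F$ is a stratified subspace of the relevant mapping space whose codimension-one part consists of the ``first-order'' failures of the conditions of Definition~\ref{gencurve:defn}, so after a small perturbation of $\{C_t\}$ rel endpoints, still within proper immersions, the curve $C_t$ is generic for $F$ except at finitely many \emph{bad times} $t_1<\dots<t_k$, at each of which exactly one codimension-one failure occurs, transversally and generically. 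On each complementary interval the decorated graph $\Gamma_t=\varphi(\Gencurves)^{-1}(C_t)$ is constant up to decorated homeomorphism, because it is a discrete invariant of a generic curve, and crossing a bad time modifies $\Gamma_t$ by a local combinatorial modification, which it remains to identify.

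The heart of the proof is then the enumeration of the codimension-one failures and their matching with the moves, which I would organize as follows. Failures of condition~(D) are a transverse self-tangency of $C$ in the interior (a birth or death of a pair of double points, with sub-cases according to the position of the orbits of $F$ relative to the nascent bigon) and the degenerate position of a double point on $\partial\Sigma$; failure of condition~(X) is the passage of a boundary point of $C$ through a point-orbit. These produce the moves $i^D_j$ and $i^X$ of Figures~\ref{D-moves:fig} and~\ref{X-move:fig}. The internal degeneration of a single special point --- an order-three contact of $C$ with an orbit (failure of~(2)), a double point with one strand tangent to the flow (failure of~(3)), and a tangency of $C$ to an orbit sliding onto $\partial\Sigma$ (failure of~(4)) --- produces $i^2$, $i^3$ and $i^4_\pm$ of Figures~\ref{2-move+3-move:fig} and~\ref{4-moves:fig}. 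Finally, the failure of the last clause of Definition~\ref{gencurve:defn} is two special points of types drawn from $\{(2),(3),(4),(5)\}$ lying momentarily on one orbit; since a pair of type-$(5)$ points on a single orbit would violate the genericity of the fixed flow $F$, the realizable pairs are exactly $\{2,2\},\{2,3\},\{2,4\},\{2,5\},\{3,3\},\{3,4\},\{3,5\},\{4,4\},\{4,5\}$, giving the moves $i^{2,2}_j,\,i^{2,3}_j,\,i^{2,4}_{\pm,j},\,i^{2,5}_j,\,i^{3,3},\,i^{3,4}_\pm,\,i^{3,5}_j,\,i^{4,4}_{\pm,\pm,j},\,i^{4,5}_{\pm,j},\,i^{4,5}_{+,-}$ of Figures~\ref{2-2-moves:fig}--\ref{4-5-moves:fig}. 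In each case one reads off the transition of the decorated graph, and the admissibility condition written under the move is precisely the bookkeeping of the weights at trivalent vertices and of the heights --- which count how many strata of $C$ an orbit crosses before and after the relevant special point --- forced by Definition~\ref{gengraph:defn} and Remark~\ref{contour:explanation:rem}.

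The main obstacle I expect is not conceptual but the sheer length and delicacy of this last case analysis. One must verify that the enumeration of codimension-one strata is complete (no further first-order violation can occur), that each stratum is connected --- or, when it is not, that its components are interchanged by the vertical-mirror conventions of Remark~\ref{left-right:rem} and by the exchange of the two sides of a move --- and, most delicately, that the weights and heights on both sides of every move are the correct ones; for the collision moves $i^{a,b}$ and the ``slide past a trivalent vertex'' moves $i^{a,5}$ this requires carefully tracking how many strata of $C$ each nearby orbit crosses before and after the special point involved. A secondary technical point to be handled with care is that being a proper immersion is not literally an open condition, so the perturbations in the transversality argument must be taken to fix a neighbourhood of $\partial c$; one then checks that perturbations of this restricted type still suffice to achieve transversality to all the codimension-one strata.
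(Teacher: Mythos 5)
Your proposal is correct and follows essentially the same route as the paper: perturb the homotopy so that the curve is generic for $F$ except at finitely many times, at each of which a single codimension-one catastrophe occurs, and translate each catastrophe into a move; the paper organizes the catastrophes exactly as you do, into elementary violations of one of the conditions (D), (X), (2), (3), (4) of Definition~\ref{gencurve:defn} and collisions on a single orbit of two special points of types (2)--(5), with the pair $(5,5)$ excluded by genericity of the fixed $F$. The only slip is in your enumeration of the (D)-failures: you omit the transverse triple point, which is the codimension-one stratum producing $i^D_2$ (your ``sub-cases of the self-tangency'' account only for $i^D_1$), while conversely the double point on $\partial\Sigma$ does not yield a separate $i^D$ move --- in the paper it is absorbed into the $(4,4)$ collision, being exactly an orbit carrying two points of type (4) at one of its ends.
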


\begin{proof}
The first two items were already shown in Proposition~\ref{curves:prop}.

For the third item we must list all the possible catastrophes, namely the elementary violations of the genericity condition of Definition~\ref{gencurve:defn},
that can occur along a homotopy of $C$ and that cannot be avoided by a small perturbation, and show that they translate into the moves in $\calM(\Gencurves)$.
These catastrophes come in two flavours:
\begin{itemize}
\item[(I)] Elementary violations of one of the three conditions in (D) or of the single condition in (X), (2), (3), or (4);
\item[(II)] Presence on the same arc orbit $f$ of $F$ of two points of type (2), (3), (4), or (5).
\end{itemize}

We will now describe these catastrophes in detail and at the same time provide their diagrammatic translation.
Starting with type (I), an elementary violation of (D) occurs if there is a point $P$ as follows:
\begin{itemize}

\item[(D$_1$)] A non-transverse double point of $C$ away from $\partial\Sigma$; we can assume
that the two strands of $C$ through $P$ have order-2 contact with each other and are transverse to $F$ at $P$; then
Fig.~\ref{D-move-1-proof:fig} shows that this corresponds to the move $i^{D}_1$
(here, and always later, in direction left to right);
    \begin{figure}
    \faifig{}
    {\includegraphics[scale=0.6]{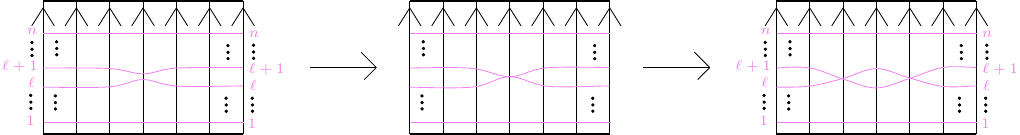}}
    {The catastrophe giving $i^D_1$.\label{D-move-1-proof:fig}}
    \end{figure}

\item[(D$_2$)] A transverse triple point of $C$ away from $\partial\Sigma$; we can assume
that all three strands of $C$ through $P$ are transverse to $F$ at $P$; Fig.~\ref{D-move-2-proof:fig} shows that this corresponds to the move $i^{D}_2$;
    \begin{figure}
    \faifig{}
    {\includegraphics[scale=0.6]{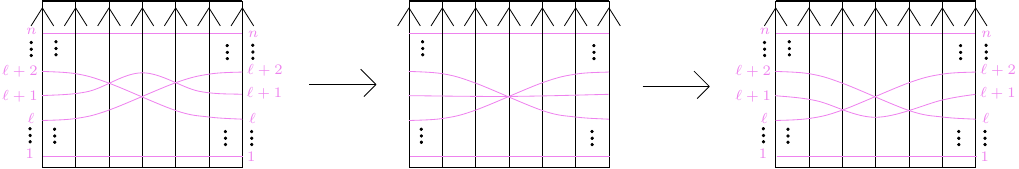}}
    {The catastrophe giving $i^D_2$.\label{D-move-2-proof:fig}}
    \end{figure}

\item A transverse double point on $\partial\Sigma$; we can assume that both the strands of $C$ through $P$ are transverse to $F$ at $P$;
this case can also be described as one in which a single orbit of $F$ contains two points of type (4), so we can treat it later.

\end{itemize}

The other catastrophes of type (I) come if there is a point $P$ as follows:

\begin{itemize}

\item[(X)] An orbit of $F$ reduces to $P$ and $P$ is the end of a strand of $C$;
Fig.~\ref{X-move-proof:fig} shows we get $i^{X}$;
    \begin{figure}
        \faifig{}
    {\includegraphics[scale=0.6]{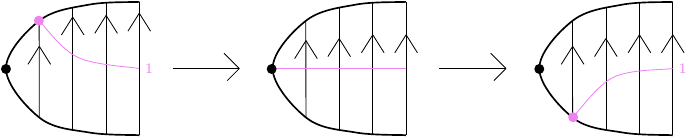}}
    {The catastrophe giving $i^X$.\label{X-move-proof:fig}}
    \end{figure}

\item[(2)] $P$ is a  point of triple contact between $C$ and an orbit of $F$; by
Fig.~\ref{2-move-proof:fig} we get $i^{2}$;
    \begin{figure}
        \faifig{}
    {\includegraphics[scale=0.6]{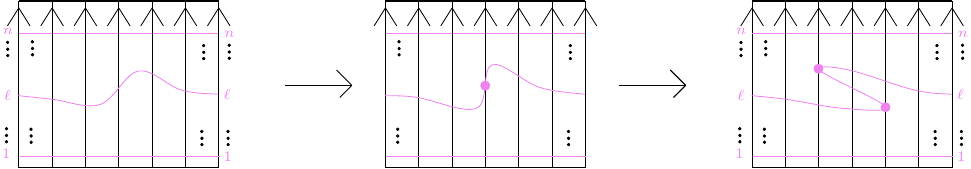}}
    {The catastrophe giving $i^2$.\label{2-move-proof:fig}}
    \end{figure}

\item[(3)] $P\in\calD(C)$ and one of the strands of $C$ at $P$ is tangent
to an orbit of $F$; we can assume the order of contact is $2$;
by Fig.~\ref{3-move-proof:fig} we get $i^{3}$;
    \begin{figure}
    \faifig{}
    {\includegraphics[scale=0.6]{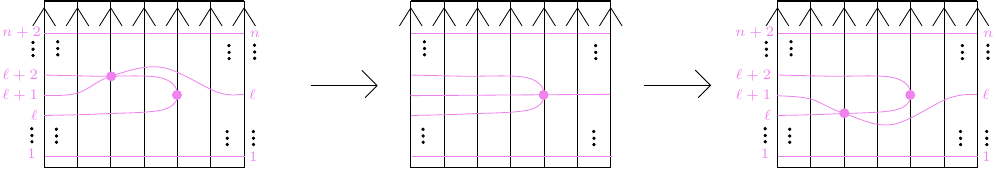}}
    {The catastrophe giving $i^3$.\label{3-move-proof:fig}}
    \end{figure}

\item[(4)] $P\in C\cap \partial\Sigma$ and $C$ is tangent to the orbit of $F$ through $P$;
by Fig.~\ref{4-moves-proof:fig}, depending on whether $P\in\partial_+\Sigma$ or $P\in\partial\Sigma\setminus\partial_+\Sigma$, we get $i^{4}_\pm$.
    \begin{figure}
    \faifig{}
    {\includegraphics[scale=0.6]{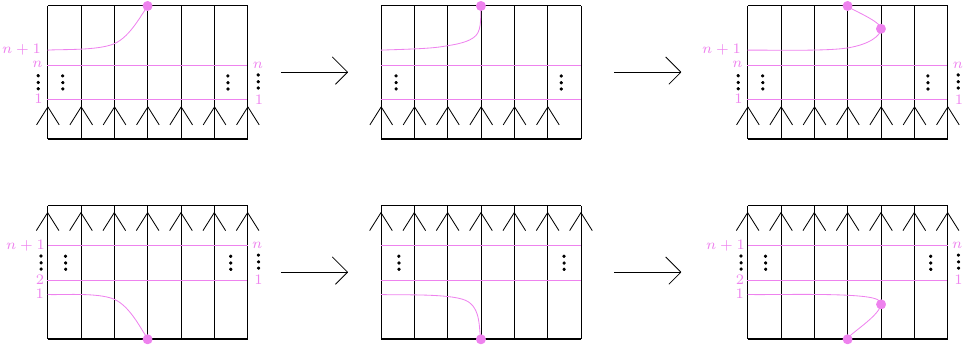}}
    {The catastrophes giving $i^4_{\pm}$.\label{4-moves-proof:fig}}
    \end{figure}
\end{itemize}

Now we turn to catastrophes ot type (II), that occur if there is an orbit $f$ of $F$ as follows:

\begin{itemize}

\item[(2,2)] Two branches of $C$ are tangent to $f$ (at different points and both with an order-2 contact);
by Fig.~\ref{2-2-moves-proof:fig}, depending on whether these branches lie to opposite sides of $f$ or to the same side, we get $i^{2,2}_j$ for $j=1,2$;
    \begin{figure}
    \faifig{}
    {\includegraphics[scale=0.6]{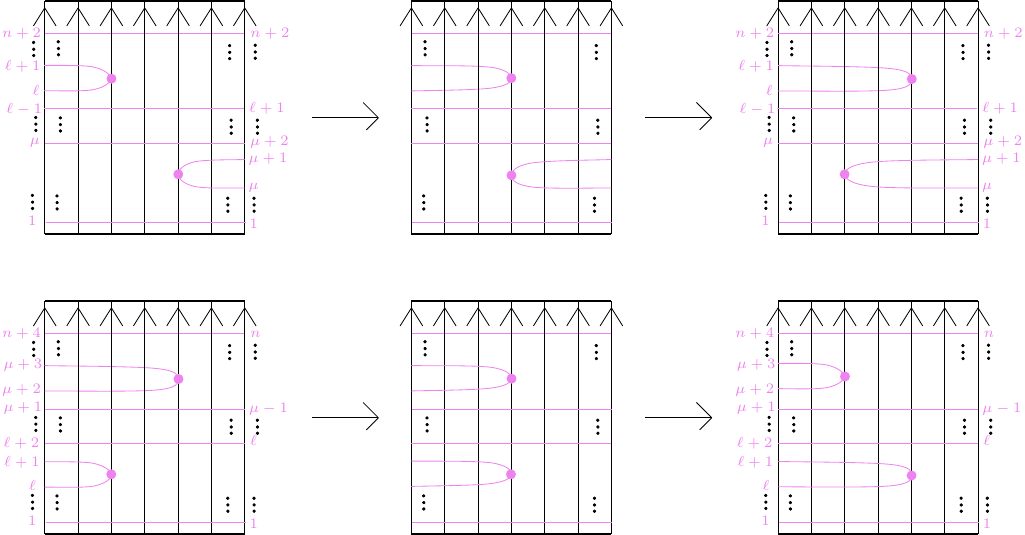}}
    {The catastrophes giving $i^{2,2}_j$.\label{2-2-moves-proof:fig}}
    \end{figure}

\item[(2,3)] A branch of $C$ is tangent to $f$ (with an order-2 contact) at a point $P$ and two other branches cross each other transversely at a point $Q$ of $f$
(and they cross $f$ transversely);
by Fig.~\ref{2-3-moves-proof:fig}, depending on whether $P$ comes after or before $Q$ along $f$, we get $i^{2,3}_j$ for $j=1,2$;
    \begin{figure}
    \faifig{}
    {\includegraphics[scale=0.6]{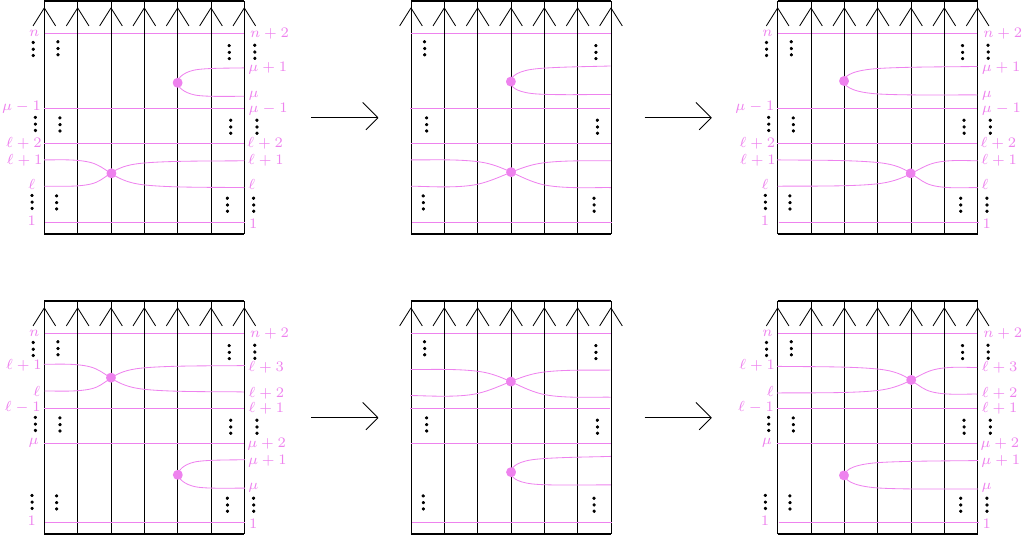}}
    {The catastrophes giving $i^{2,3}_j$.\label{2-3-moves-proof:fig}}
    \end{figure}

\item[(2,4)] A branch of $C$ is tangent to $f$ (with an order-2 contact) and another branch of $C$ meets $\partial\Sigma$ at an end $P$ of $f$;
by Fig.~\ref{2-4-moves-proof:fig}, depending on whether $P$ is the first or last end of $f$ and on whether the two branches of $C$ lie
to the same or to opposite sides of $f$, we get $i^{2,4}_{\pm,j}$ for $j=1,2$;
    \begin{figure}
    \faifig{}
    {\includegraphics[scale=0.6]{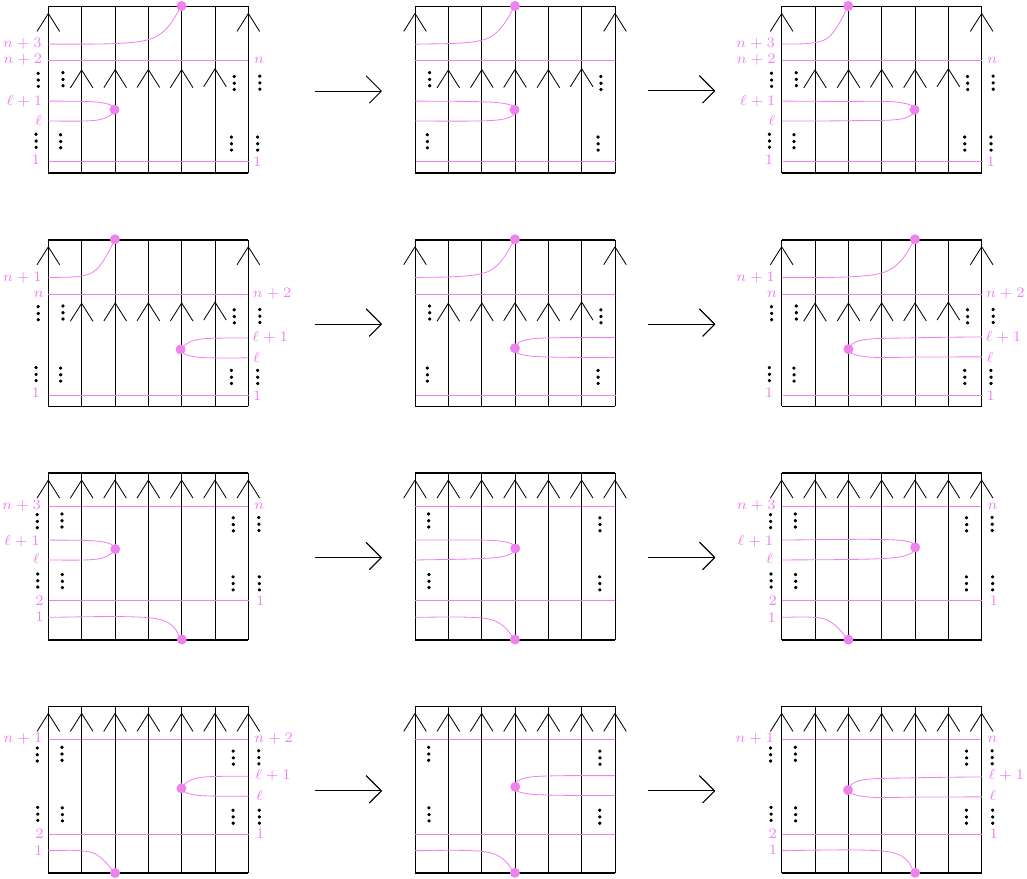}}
    {The catastrophes giving $i^{2,4}_{\pm,j}$.\label{2-4-moves-proof:fig}}
    \end{figure}

\item[(2,5)] A branch of $C$ and one of $\partial\Sigma$ are tangent to $f$ (with an order-2 contact)
at points $P$ and $Q$ respectively;
by Fig.~\ref{2-5-moves-proof:fig}, depending on whether
the branches of $C$ and $\partial\Sigma$ lie to opposite or to the same side of $f$
and on whether
$P$ comes before or after $Q$ on $f$, we get $i^{2,5}_j$ for $j=1,\ldots,4$;
    \begin{figure}
    \faifig{}
    {\includegraphics[scale=0.6]{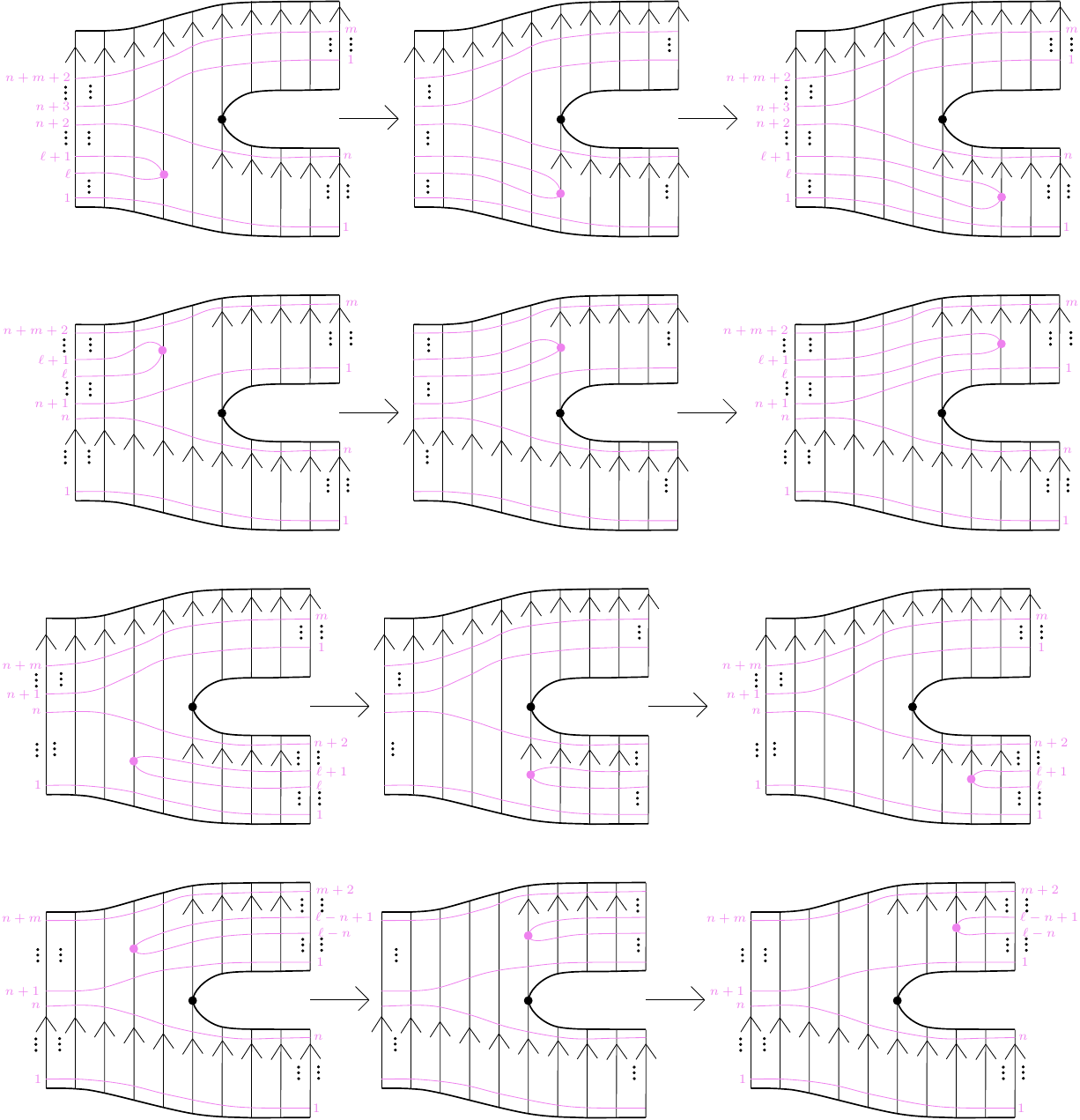}}
    {The catastrophes giving $i^{2,5}_{j}$.\label{2-5-moves-proof:fig}}
    \end{figure}

\item[(3,3)] Two transverse double points of $C$ belong to $f$ (and all the branches of $C$ are transverse to $f$);
by Fig.~\ref{3-3-move-proof:fig} we get $i^{3,3}$;
    \begin{figure}
    \faifig{}
    {\includegraphics[scale=0.6]{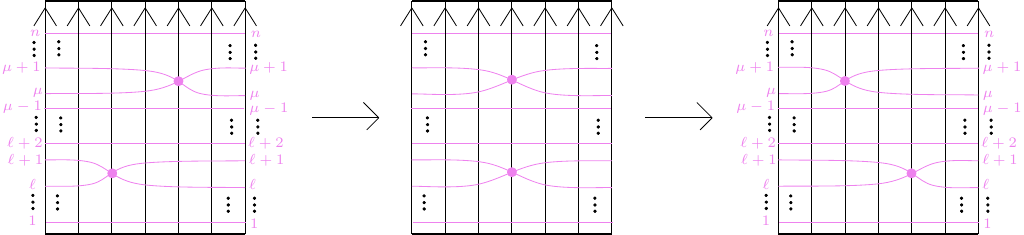}}
    {The catastrophe giving $i^{3,3}$.\label{3-3-move-proof:fig}}
    \end{figure}

\item[(3,4)] A transverse double point of $C$ and an endpoint $P$ of $C$ belong to $f$  (and all the branches of $C$ are transverse to $f$);
by Fig.~\ref{3-4-moves-proof:fig}, depending on whether $P$ is the last or first end of $f$, we get $i^{3,4}_\pm$;
\begin{figure}
    \faifig{}
    {\includegraphics[scale=0.6]{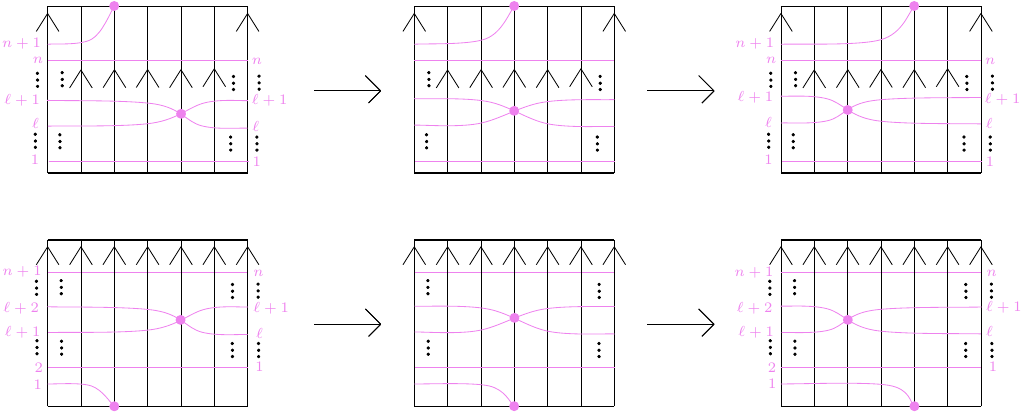}}
    {The catastrophes giving $i^{3,4}_{\pm}$.\label{3-4-moves-proof:fig}}
    \end{figure}

\item[(3,5)] A transverse double point $P$ of $C$ belongs to $f$ (and all the branches of $C$ are transverse to $f$) and a
branch of $\partial\Sigma$ is tangent to $f$ (with an order-2 contact) at a point $Q$;
by Fig.~\ref{3-5-moves-proof:fig}, depending on whether $P$ comes after or before $Q$ on $f$, we get $i^{3,5}_j$ for $j=1,2$;
    \begin{figure}
    \faifig{}
    {\includegraphics[scale=0.6]{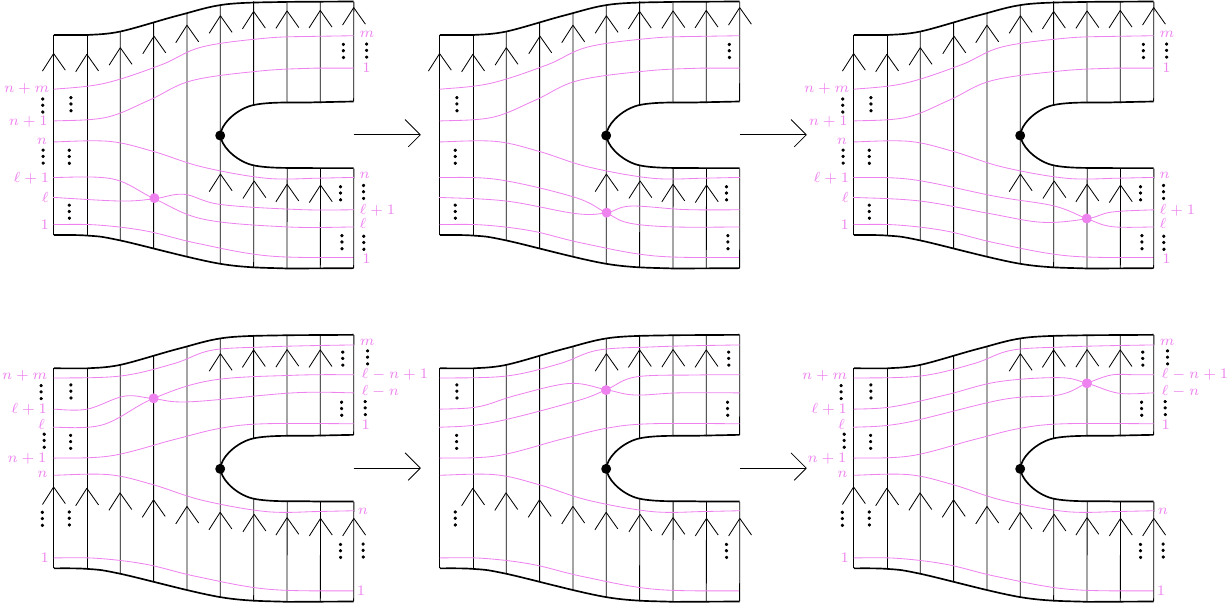}}
    {The catastrophes giving $i^{3,5}_{j}$.\label{3-5-moves-proof:fig}}
    \end{figure}

\item[(4,4)] Two branches of $C$ reach $\partial \Sigma$ at an end of $f$ (transversely to $\partial\Sigma$ and to $f$, and to each other of they meet);
here we must distinguish several cases: the two branches of $C$ can both reach the second end of $f$, or the different ends of $f$, or both
the first end of $f$; and they can lie to the same side or to opposite sides of $f$; then,
by Fig.~\ref{4-4-moves-proof:fig}, we get $i^{4,4}_{\pm,\pm,j}$ for $j=1,2$;
    \begin{figure}
    \faifig{}
    {\includegraphics[scale=0.6]{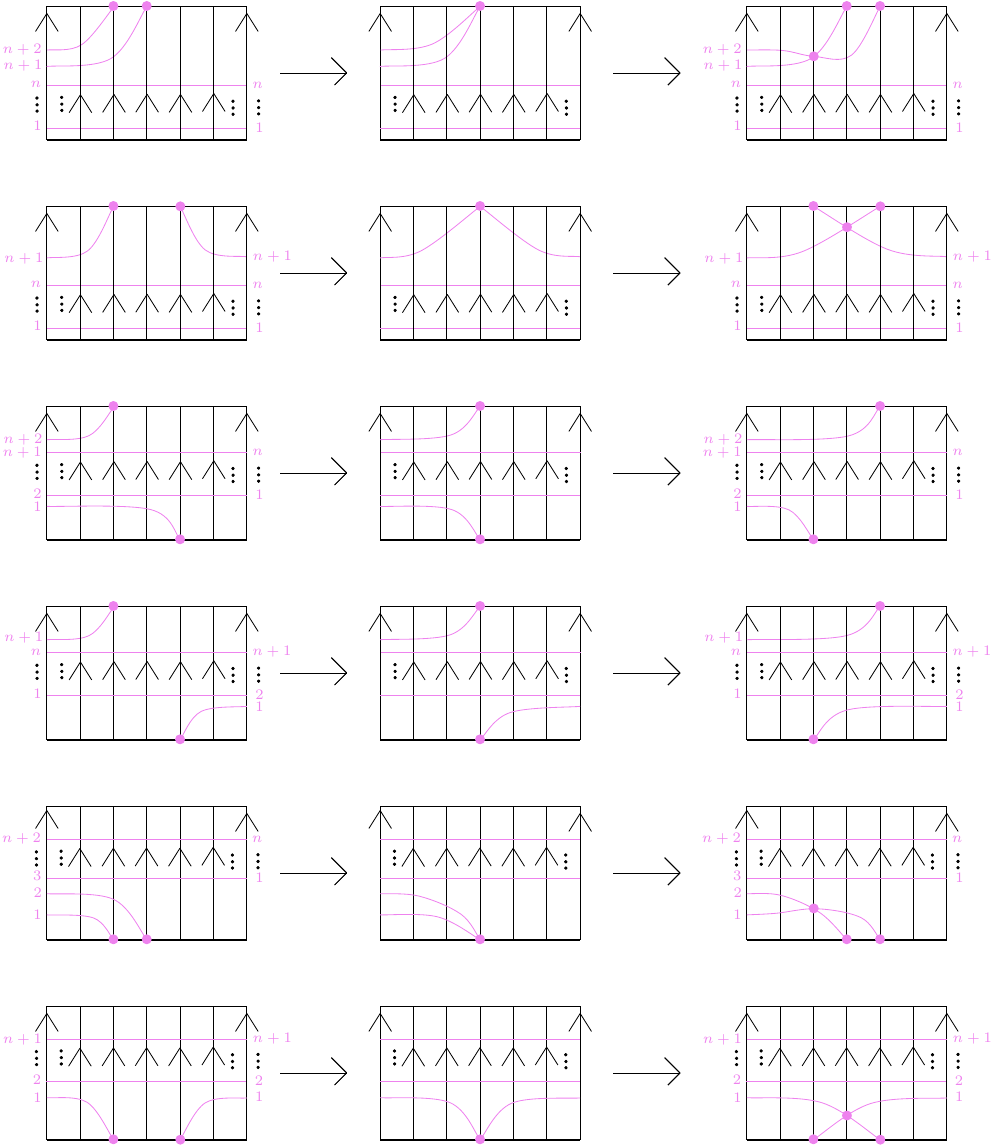}}
    {The catastrophes giving $i^{4,4}_{\pm,\pm,j}$.\label{4-4-moves-proof:fig}}
    \end{figure}

\item[(4,5)] A branch of $C$ reaches $\partial\Sigma$ at a point of $f$
(transversely to both $\partial\Sigma$ and $f$) and a branch of $\partial \Sigma$ is tangent to $f$ (with an order-2 contact);
by Fig.~\ref{4-5-moves-proof:fig}, depending on whether $C$ reaches the second end of $f$, its first end or the tangency point to $\partial\Sigma$,
and (in the first two cases) on whether the involved branches of
$C$ and $\partial\Sigma$ lie to opposite or to the same side of $f$, we get $i^{4,5}_{\pm,j}$ for $j=1,2$ and $i^{4,5}_{+,-}$.
    \begin{figure}
    \faifig{}
    {\includegraphics[scale=0.6]{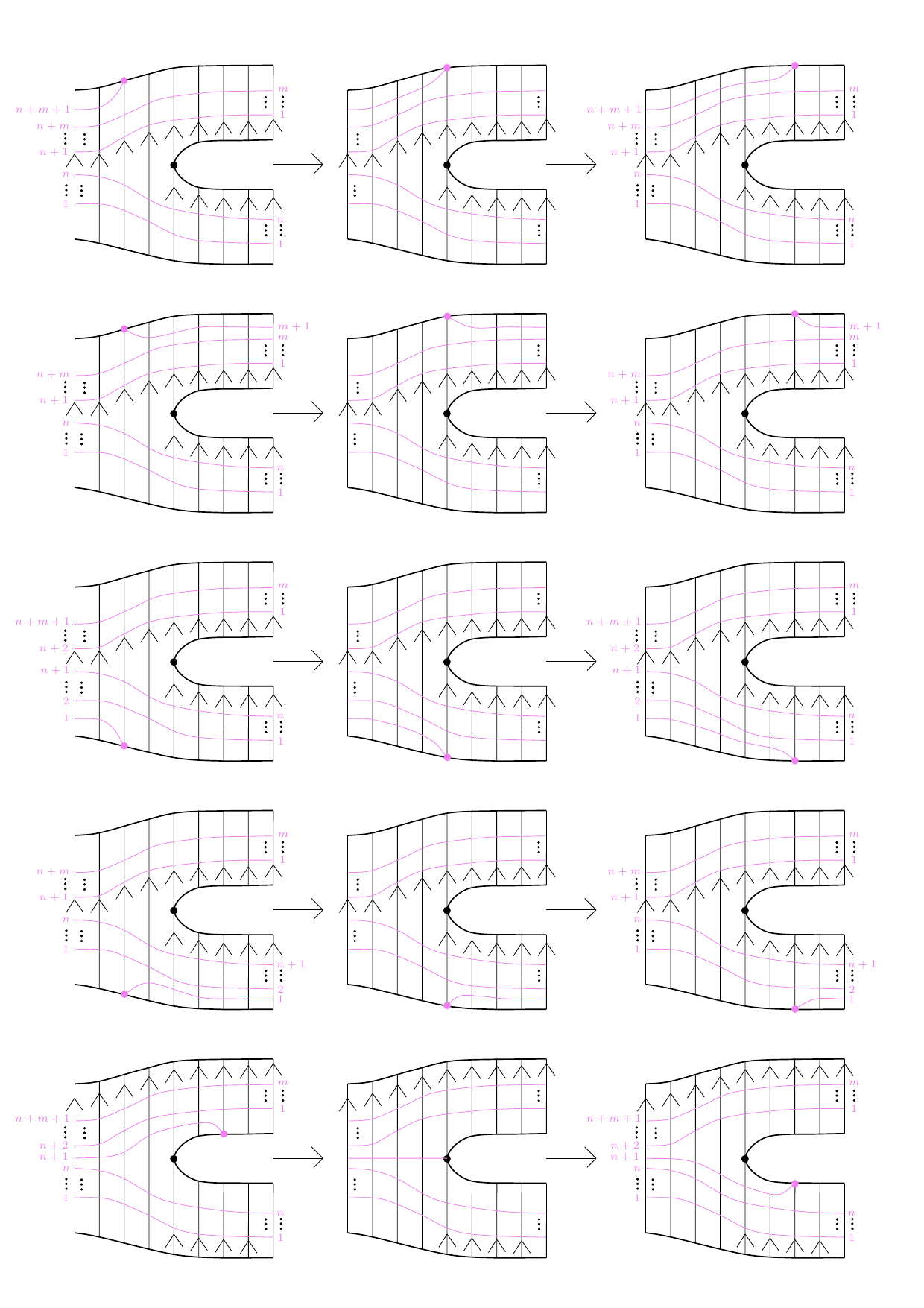}}
    {The catastrophes giving $i^{4,5}_{\pm,j}$ and $i^{4,5}_{+,-}$.\label{4-5-moves-proof:fig}}
    \end{figure}

\end{itemize}

Our argument is complete.
\end{proof}

\begin{rem}\label{moves:for:embedded:closed:curves:rem}
\emph{Theorem~\ref{curves:thm} provides a combinatorial presentation of the set of properly immersed curves in $\Sigma$ up to homotopy through similar curves.
In the spirit of Remark~\ref{graphs:for:embedded:closed:curves:rem}, this presentation restricts to
one of properly \emph{embedded} curves in $\Sigma$, of immersed \emph{closed} curves in $\Sigma$, or of \emph{embedded closed} curves in $\Sigma$ by
forbidding bivalent vertices of some type in Definition~\ref{gengraph:defn} and ignoring the moves involving them. Namely, respectively:
\begin{itemize}
\item Forbid type (3) and ignore
$i^D_j$, $i^3$, $i^{2,3}_j$, $i^{3,3}$, $i^{3,4}_\pm$, $i^{3,5}_j$;
\item Forbid type (4) and ignore
$i_X$, $i^4_{\pm}$, $i^{2,4}_{\pm,j}$, $i^{3,4}_\pm$, $i^{4,4}_{\pm,\pm,j}$, $i^{4,5}_{\pm,j}$, $i^{4,5}_{+,-}$;
\item Forbid types (3) and (4) and ignore all the previously listed moves.
\end{itemize}}
\end{rem}

\section{Simultaneous homotopy\\ of the curve and the flow}
In this section we will put together the results of the previous ones.
Recall that $\Pairs$ is the quotient of the set of
all pairs $(F,C)$, where $F$ is a traversing flow and $C$ is a properly immersed curve in the same (varying) surface $\Sigma$,
up to diffeomorphisms of $\Sigma$ and simultaneous homotopic variation of $F$ and $C$.
We then define $\Genpairs$ as the set of all such pairs $(F,C)$, with $F$ generic and $C$ generic for $F$,
up to diffeomorphisms of $\Sigma$, and $\pi(\Genpairs):\Genpairs\to\Pairs$ as the obvious quotient map.

We now define $\calG(\Genpairs)$ as the set of all finite connected graphs with vertices of valence $1$, $2$ or $3$,
a $T/B/A$ labelling of the germ of edges at each trivalent vertex, and a decoration of the edges and the bivalent vertices precisely as in
Definition~\ref{gengraph:defn}.
In other words, $\calG(\Genpairs)$ is the disjoint union of all $\calG(\Gencurves)$ as $\Sigma$ varies among surfaces and $F$ varies
among generic traversing flows on $\Sigma$. We then have an obvious bijective reconstruction map
$$\varphi(\Genpairs):\calG(\Genpairs)\to\Genpairs.$$

Next, we introduce on $\calG(\Genpairs)$ the set of finite local combinatorial moves $\calM(\Genpairs)$ given by
those defined in
Fig.~\ref{i-from-s-moves:fig}
together with all those in
$\calM(\Gencurves)$, see Figg.~\ref{D-moves:fig} to~\ref{4-5-moves:fig} and recall Remark~\ref{universal:moves:rem}.
\begin{figure}
\faifig{}
{\includegraphics[scale=0.6]{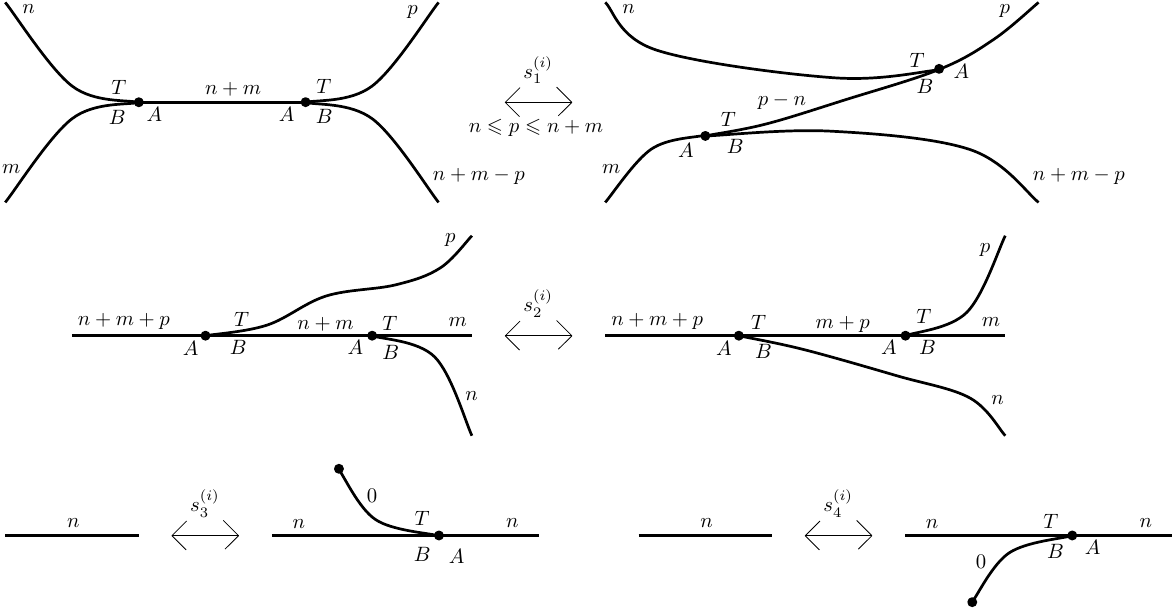}}
{Moves for graphs representing properly immersed curves derived from the moves for flow-spines of
Fig.~\ref{s-moves:fig}\label{i-from-s-moves:fig}}
\end{figure}

\begin{rem}\label{left-right-bis:rem}
\emph{The discussion of Remark~\ref{left-right:rem} applies to the moves of Fig.~\ref{i-from-s-moves:fig}.
Here we always have
$\reflectbox{\texttt{R}}\neq\texttt{R}$, while
$\reflectbox{\texttt{L}}=\texttt{L}$ precisely for $s^{(i)}_1$ with $p=n$ (\emph{i.e.}, $n+m-p=m$), for
$s^{(i)}_3$ and for $s^{(i)}_4$, so in all these cases we have both the moves
$\texttt{L}\to\texttt{R}$ and
$\texttt{L}\to\reflectbox{\texttt{R}}$.}
\end{rem}

\begin{thm}\label{pairs:thm}\
\begin{itemize}
\item $\varphi(\Genpairs):\calG(\Genpairs)\to \Genpairs$ is bijective;
\item $\pi(\Genpairs): \Genpairs\to \Pairs$ is surjective;
\item Two graphs in $\calG(\Genpairs)$ have the same image in $\Pairs$ under the composition $\pi(\Genpairs)\compo\varphi(\Genpairs)$
if and only if they are related by a finite combination of moves in $\calM(\Genpairs)$.
\end{itemize}
\end{thm}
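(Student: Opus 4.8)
The first two items are essentially formal. Bijectivity of $\varphi(\Genpairs)$ follows by the same argument as in Proposition~\ref{curves:prop}: the construction assigns to a decorated graph $\Gamma$ a pair $(F,C)$ well-defined up to diffeomorphism, and reversing the arrows in Fig.~\ref{flowportions:fig} and Fig.~\ref{curveportions:fig} simultaneously recovers $\Gamma$ from $(F,C)$ up to decorated homeomorphism. Surjectivity of $\pi(\Genpairs)$ follows by combining Proposition~\ref{gencurves:prop} with the genericity perturbation for flows recalled after Definition of $\Genflows$: given any pair $(F_0,C_0)$, first perturb $F_0$ to a generic $F$ (a homotopy of flows, keeping $C_0$ fixed), then perturb $C_0$ to a curve $C$ generic for $F$ (a homotopy of proper immersions); the composite is a simultaneous homotopy, so $(F,C)$ maps to the class of $(F_0,C_0)$.

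The substance is the third item. The plan is to run the same "list all first-order catastrophes" strategy used in Theorems~\ref{flows:thm} and~\ref{curves:thm}, but now for a \emph{simultaneous} generic path $\{(F_t,C_t)\}$ of a traversing flow together with a properly immersed curve. First I would set up the correct notion of genericity for a path: a path in $\Genpairs$ is generic if at all but finitely many times $t$ the pair $(F_t,C_t)$ is generic in the sense of Definition~\ref{gencurve:defn} (with $F_t$ generic), and at each exceptional time exactly one elementary degeneracy occurs, transversally. One then argues, by a standard transversality/jet-space count, that any simultaneous homotopy can be perturbed to such a generic path with the same endpoints, so it suffices to enumerate the elementary degeneracies and check each translates into a move in $\calM(\Genpairs)$. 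The degeneracies split into three groups: (a) those that keep $F_t$ generic but violate genericity of $C_t$ relative to $F_t$ — these are precisely the catastrophes of types (I) and (II) already analysed in the proof of Theorem~\ref{curves:thm}, giving all the moves $i^*_*$ of Figg.~\ref{D-moves:fig}–\ref{4-5-moves:fig}; (b) those that keep $C_t$ out of the way but violate genericity of $F_t$ — these are the four flow catastrophes from the proof of Theorem~\ref{flows:thm}, giving $s_1,\dots,s_4$, now enhanced by the weight and height labels of the strands of $C$ that the evolving flow sweeps across, which is exactly the content of Fig.~\ref{i-from-s-moves:fig} (moves $s^{(i)}_1,\dots,s^{(i)}_4$); and (c) genuinely mixed degeneracies, where a flow-catastrophe and a curve-catastrophe would happen simultaneously. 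For (c) the key observation is that, by general position, a generic simultaneous homotopy can always be perturbed so that a flow-degeneracy and any curve-degeneracy occur at \emph{distinct} times on \emph{distinct} orbits, so no new moves arise — the only book-keeping is to verify that when a flow move $s^{(i)}_j$ is performed, the heights and weights carried by the strands of $C$ crossing the relevant orbits transform in the way prescribed by Fig.~\ref{i-from-s-moves:fig}, with the left--right and mirroring conventions of Remark~\ref{left-right-bis:rem}.

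I expect the main obstacle to be the rigorous justification of the genericity-of-paths statement together with the "no mixed catastrophes" claim in (c): one must be sure that the enhanced flow-moves $s^{(i)}_j$ already account for \emph{every} way the apparent-contour decoration can change when the flow-spine changes by an $s$-move, including the case where some strand of $C$ is tangent to, or ends on, or passes through a double point on, an orbit $f$ that is itself one of the two orbits taking part in the flow-catastrophe. The clean way around this is to note that by a further small perturbation of the homotopy one can always arrange that the two orbits participating in a flow-catastrophe carry only points of $C$ of types (0) and (1) at the catastrophe time — so the strands of $C$ near the flow-catastrophe are mere "spectators" threading transversely through the region where $\Sigma$ changes shape, contributing only their weights and (via their position relative to the moving folds of $\partial\Sigma$) the heights recorded in Fig.~\ref{i-from-s-moves:fig}. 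Once this reduction is in place, the verification that each elementary degeneracy produces exactly one move on $\calG(\Genpairs)$, and conversely that each such move is realised by a compactly-supported simultaneous homotopy, is a direct reading of the figures, entirely analogous to the proofs of Theorems~\ref{flows:thm} and~\ref{curves:thm}, which completes the argument.
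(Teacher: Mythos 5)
Your proposal is correct and follows essentially the same route as the paper: the first two items are formal, and for the third the paper likewise perturbs the simultaneous homotopy so that at each exceptional time either $F$ stays generic and $C$ undergoes one of the catastrophes of Theorem~\ref{curves:thm}, or $F$ undergoes a catastrophe of Theorem~\ref{flows:thm} while the relevant orbit is transverse to $C$ and misses $C$ on $\partial\Sigma$, yielding the enhanced moves of Fig.~\ref{i-from-s-moves:fig}. Your explicit handling of the ``mixed catastrophe'' case (c) is exactly the general-position reduction the paper uses implicitly.
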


\begin{proof}
The first two items were already discussed above.

For the third item, we remark that a simultaneous homotopy of a traversing flow $F$ and a properly immersed curve $C$ in $\Sigma$ can be perturbed so that:
\begin{itemize}
\item At all but finitely many times the flow $F$ is generic and the curve $C$ is generic for $F$;
\item At each of the special times:
\begin{itemize}
\item Either $F$ is generic and the curve $C$ undergoes one of the catastrophes already examined in the proof of
Theorem~\ref{curves:thm};
\item Or $F$ undergoes one of the catastrophes examined in the proof Theorem~\ref{flows:thm}, namely there is an orbit
$f$ of $F$ with two order-$2$ or one order-$3$ tangency to $\partial\Sigma$, but
$f$ is transverse to $C$ and the point(s) of $f\cap\partial\Sigma$ do not belong to $C$.
\end{itemize}
\end{itemize}

The transition through one of these times then corresponds to either one of the moves in $\calM(\Gencurves)$
or, as shown in Figg.~\ref{si-1-move-proof:fig} to Fig.~\ref{si-3-move-proof:fig}, to one of the extra moves
of Fig.~\ref{i-from-s-moves:fig} (the figure for $s_4^{(i)}$ is very similar to that for $s_3^{(i)}$).
\begin{figure}
\faifig{}
{\includegraphics[scale=0.6]{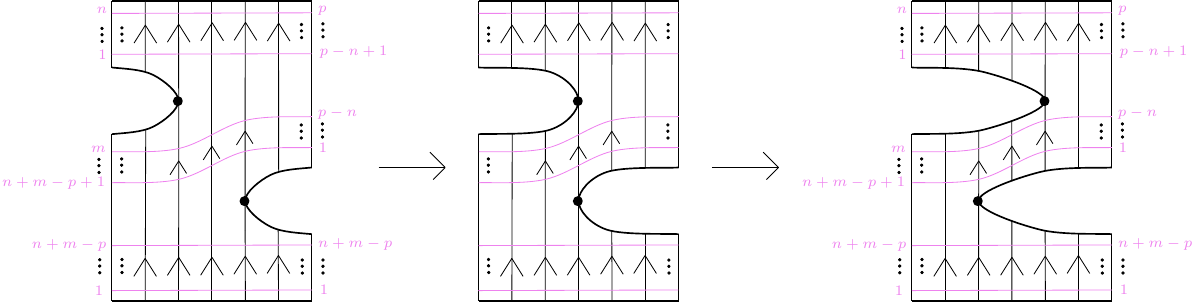}}
{The move $s_1$ for flow-spines enhanced to a move $s_1^{(i)}$ for graphs representing properly immersed curves.\label{si-1-move-proof:fig}}
\end{figure}
\begin{figure}
\faifig{}
{\includegraphics[scale=0.6]{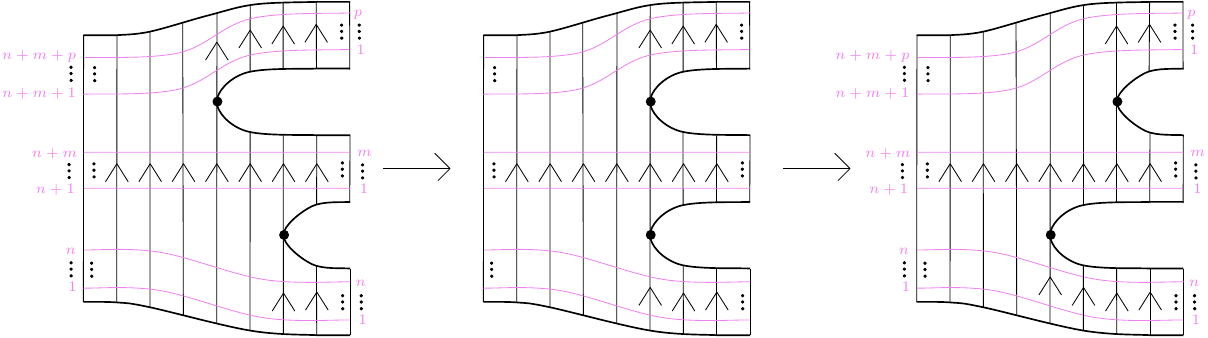}}
{The move $s_2$ enhanced to a move $s_2^{(i)}$.\label{si-2-move-proof:fig}}
\end{figure}
\begin{figure}
\faifig{}
{\includegraphics[scale=0.6]{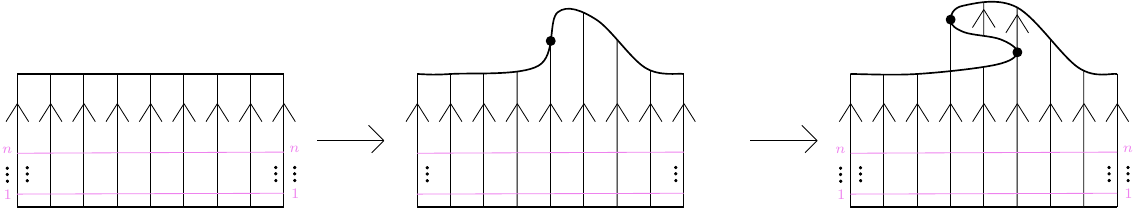}}
{The move  $s_3$ enhanced to a move $s_3^{(i)}$.\label{si-3-move-proof:fig}}
\end{figure}
The proof is complete.
\end{proof}

\begin{rem}\label{embedded:closed:pairs:rem}
\emph{In the spirit of Remarks~\ref{graphs:for:embedded:closed:curves:rem} and~\ref{moves:for:embedded:closed:curves:rem},
one can give a combinatorial presentation
of the set of pairs $(F,C)$ up to simultaneous homotopy, with $C$ either embedded, or closed, or closed and embedded, by
taking graphs in $\calG(\Genpairs)$ without vertices of type (3), or (4), or (3) and (4), and ignoring
the moves involving them.}
\end{rem}



\vspace{.5cm}

\noindent
Dipartimento di Matematica\\
Universit\`a di Pisa\\
Largo Bruno Pontecorvo, 5\\
56127 PISA -- Italy\\
\texttt{petronio@dm.unipi.it}

\end{document}